\theoremstyle{plain}
\newtheorem{thm}{Theorem}[section]
\newtheorem{lemma}[thm]{Lemma}
\newtheorem{cor}[thm]{Corollary}
\numberwithin{equation}{section}
\newtheorem{rem}[thm]{Remark}
\newcommand{\beq}{\begin{eqnarray}}
\newcommand{\eeq}{\end{eqnarray}}
\newcommand{\beqno}{\begin{eqnarray*}}
\newcommand{\eeqno}{\end{eqnarray*}}
\begin{document}
\title[Compressible nematic liquid crystal flow]
{Global finite energy weak solutions to the compressible nematic liquid crystal flow in dimension three}
\author[J. Lin]{Junyu Lin}
\address{Department of Mathematics\\
 South China  University of Technology\\
 Guangdong 510640, P. R. China}
\email{scjylin@sctu.edu.cn}
\author[B. Lai]{Baishun Lai}
\address{School of Mathematics and Information Sciences\\
 Henan University\\
 Kaifeng 475004, Henan, P. R. China}
\email{laibaishun@gmail.com}
\author[C. Wang]{Changyou Wang}
\address{
Department of Mathematics, Purdue University\\
150 N. University Street, West Lafayette, IN 47907, USA}
\email{wang2482@purdue.edu}
\keywords{renomalized solutions, compressible nematic liquid crystal flow, finite energy solutions}
\date{\today}
\maketitle
\begin{abstract} In this paper,  we consider  the initial and boundary value problem of a simplified
compressible nematic liquid crystal flow in $\Omega\subset\mathbb R^3$. We establish the existence of
global weak solutions, provided the initial orientational director field $d_0$ lies in the hemisphere $\mathbb S^2_+$.
\end{abstract}


\setcounter{section}{0} \setcounter{equation}{0}
\section{Introduction }

The continuum theory of liquid crystals was developed by Ericksen \cite{JE}  and Leslie \cite{FL}
during the period of 1958 through 1968, see also the book by De Gennes \cite{DG}.
Since then there have been remarkable research developments in liquid crystals from both theoretical and applied aspects.
When the fluid containing nematic liquid crystal materials is at rest, we have the well-known Oseen-Frank theory for static nematic liquid crystals, see Hardt-Lin-Kinderlehrer \cite{HLK} on the analysis of energy minimal configurations of nematic liquid crystals.
In general, the motion of fluid always takes place. The so-called Ericksen-Leslie system is a macroscopic continuum description of the time evolution of the material under influence of both the flow velocity field $u$ and the macroscopic description of the microscopic orientation configurations $d$ of rod-like liquid crystals.

When the fluid is an incompressible, viscous  fluid,
Lin \cite{Lin0} first derived a simplified Ericksen-Leslie system (i.e. $\rho=1$ and ${\rm{div}}  u=0$ in the equation (\ref{1.1}) below) modeling liquid crystal flows in 1989. Subsequently, Lin and Liu \cite{LL, LL1} have made some important analytic studies, such as the global existence of weak and strong solutions and the partial regularity of suitable weak solutions,  of the simplified Ericksen-Leslie system, under the assumption that the liquid crystal director field is of varying length by Leslie's terminology or variable degree of orientation by Ericksen's terminology.  When dealing with the system (\ref{1.1}) with $\rho=1$ and
${\rm{div}} u=0$,  in dimension two Lin-Lin-Wang \cite{LLW}  and Lin-Wang \cite{LinW} have established the existence of a unique global weak solution, that has at most finitely many possible singular time,  for the initial-boundary value problem in bounded domains (see also Hong \cite{Hong}, Xu-Zhang \cite{XZ}, and Lei-Li-Zhang \cite{LLZ} for some related works); and in dimension three Lin-Wang \cite{LW1} have obtained the existence of global weak solutions very recently when the initial director field
$d_0$ maps to the hemisphere $\mathbb S^2_+$.

When the fluid is compressible, the simplified Ericksen-Leslie system (\ref{1.1})
becomes more complicate, which is a strongly coupling system
between the compressible Navier-Stokes equation and the transported harmonic map heat flow to $\mathbb S^2$.
It seems worthwhile to be explored for the  mathematical analysis of (\ref{1.1}).
We would like to mention that there have been both modeling study, see Morro \cite {Morro},
and numerical study, see Zakharov-Vakulenko \cite{ZV}, on the hydrodynamics of compressible
nematic liquid crystals  under the influence of temperature gradient or electromagnetic forces.

Now let's introduce the simplified Ericksen-Leslie system for compressible nematic liquid crystal flow.
Let $\Omega\subset \mathbb R^3$ be a bounded, smooth domain, $\mathbb S^2\subset\mathbb R^3$
be the unit sphere, and $0<T\leq+\infty$. We will consider a simplified version of the three dimensional
hydrodynamic flow of the compressible nematic liquid crystal flow in $\Omega\times (0,T)$,
i.e., $(\rho, u,d):\Omega\times (0,T)\to\mathbb R_+\times\mathbb R^3\times\mathbb S^2$ solves
\beq\label{1.1}
\begin{cases}
\partial_t\rho+\nabla\cdot(\rho u)=0,\\
\partial_t(\rho u)+\nabla\cdot(\rho u\otimes u)+a\nabla\rho^{\gamma}=\mathcal{L}u-\nabla\cdot\big(\nabla d\odot\nabla d-\frac12|\nabla d|^2\mathbb{I}_3\big),\\
\partial_t d+u\cdot\nabla d=\triangle d+|\nabla d|^2d,    \end{cases} \eeq
under the initial and boundary condition:
\beq\label{1.2}
\begin{cases}
\rho(x,0)=\rho_0(x), \ \rho u(x,0)=m_0(x),  \ d(x,0)=d_0(x), \ x\in\Omega,\\
u(x,t)=0,\ \ d(x,t)=d_0(x), \ \ x\in \partial\Omega,\ t>0,
\end{cases}
\eeq
where $\rho:\Omega\times[0,T)\rightarrow \mathbb R_+$ denotes density function of the fluid, $u:\Omega\times[0,T)\rightarrow \mathbb R^3$ denotes velocity field of the fluid,  $d:\Omega\times[0,T)
\rightarrow \mathbb S^2$ denotes direction field of the averaged macroscopic molecular orientations, $\nabla\cdot$ denotes the divergence operator in $\mathbb R^3$,
$\mathbb{I}_3$ is the $3\times3$ identity matrix, $P(\rho)=a\rho^{\gamma}$, with $a>0$ and $\gamma>1$,
denotes the pressure function associated with an isentropic fluid, $\mathcal{L}$ is the Lam\'e operator defined by
$$\mathcal{L}u=\mu\triangle u+(\mu+\lambda)\nabla(\nabla\cdot u),$$
where $\mu$ and $\lambda$ represent the shear viscosity and the bulk viscosity coefficients of the
fluid respectively, which satisfy the natural physical condition:
\begin{equation}
\label{1.3}\mu>0,\ \ \ \widetilde{\mu}:=\mu+\lambda\geq 0,
\end{equation}
 $\nabla d\odot\nabla d$ denotes the $3\times 3$ matrix valued function whose ($i,j$)-entry is
$\langle \partial_{x_i} d, \partial_{x_j}d\rangle$ for $1\le i, j\le 3$, and
$\displaystyle u\otimes u=(u^iu^j)_{1\le i, j\le 3}$.

Throughout this paper, we denote $\mathbb S^2_+=\big\{y=(y^1,y^2,y^3)\in\mathbb S^2: \ y^3\ge 0\big\}$
as the upper hemisphere, $\chi_E$ denote the characteristic function of a set $E\subset\mathbb R^3$,
$$H^1(\Omega, \mathbb S^2)=\Big\{d\in H^1(\Omega,\mathbb R^3):
\ d(x)\in\mathbb S^2 \ {\rm{a.e.}}\ x\in\Omega\Big\},
$$
and $\displaystyle A:B=\sum_{i,j=1}^3A_{ij}B_{ij}$ denotes the scalar product of two $3\times 3$ matrices.
For $0<T\le +\infty$, denote
$$Q_T=\Omega\times (0,T), \ \displaystyle \partial_pQ_T
=(\Omega\times\{0\})\cup(\partial\Omega\times (0,T)),
\ \mathcal D'(Q_T)=(C_0^\infty(Q_T))'.
$$

We say $(\rho, u, d):\Omega\times [0,T)\to \mathbb R_+\times\mathbb R^3\times \mathbb S^2$ is a
{\it finite energy} weak solution of the initial-boundary value problem (\ref{1.1})-(\ref{1.2}) if the following
properties hold:
\begin{itemize}
\item[(i)] $\rho\geq0,\ \rho\in L^{\infty}((0,T), L^\gamma(\Omega)),
\ u\in L^2((0,T), H^1(\Omega, \mathbb R^3))$, and $d\in L^2((0,T), H^1(\Omega,\mathbb S^2))$.
\item[(ii)] the system (\ref{1.1}) holds in $\mathcal D'(Q_T)$, $(\ref{1.1})_1$ also holds in $\mathcal D'(\mathbb R^3\times (0,T))$
provided $(\rho, u)$ is prolonged by zero in $\mathbb R^3\setminus\Omega$,
$(\rho,\rho u, d)(x,0)=(\rho_0(x), m_0(x), d_0(x))$  for a.e. $x\in\Omega$, and $(u,d)(x,t)=(0, d_0(x))$ on $\partial\Omega\times (0,T)$
in the sense of traces.
\item[(iii)] $(\rho, u)$ satisfies $(\ref{1.1})_1$ in the sense of the renormalized solutions
introduced by DiPerna-Lions \cite{L1}, that is, $(\rho,u)$ satisfies
\begin{equation}
\label{1.7} \partial_t\big(b(\rho)\big)+\nabla\cdot(b(\rho)u)+
\big(b'(\rho)\rho-b(\rho)\big)\nabla\cdot u=0,
\end{equation}
 in the sense of distributions in $\mathbb R^3\times (0,+\infty)$ for any $b\in C^1((0,+\infty))\cap C([0,+\infty))$ such that
\beq\label{1.8}b'(z)=0\ \mbox{for\ all}\ z\in (0,+\infty) \ \mbox{large\ enough,} \ \mbox{say}\ z\geq M,
\eeq
where the constant $M>0$ may vary for different functions $b$'s. Here
$(\rho,u)$ is prolonged by zero outside $\Omega$.
\item [(iv)] $(\rho, u,d)$ satisfies the following energy inequality
\begin{equation}
\label{1.9}{\bf E}(t)+
\int_0^t\int_{\Omega}\big(\mu|\nabla u|^2+\widetilde{\mu}|\nabla\cdot u|^2+|\triangle d+|\nabla d|^2d|^2\big)\leq
{\bf E}(0),
\end{equation}
for almost all $0<t<T$. Here
\begin{equation}\label{energyt}
{\bf E}(t):=\int_{\Omega}\Big(\frac12\rho|u|^2+\frac{a\rho^{\gamma}}{\gamma-1}+\frac12|\nabla d|^2\Big)(t)
\end{equation}
is the total energy of $(\rho, u,d)$ at time $t>0$, and
\begin{equation}
\label{energy0}
{\bf E}(0)=\int_{\Omega}\Big(\frac {|m_0|^2}{2\rho_0}\chi_{\{\rho_0\ge 0\}}+
\frac{a\rho_0^{\gamma}}{\gamma-1}+\frac12|\nabla d_0|^2\Big)
\end{equation}
is the initial energy.
\end{itemize}

There have been some earlier results on (\ref{1.1}). In dimension one, the existence of global strong solutions and weak
solutions to (\ref{1.1}) has been obtained by \cite{DLWW} and \cite{DWW} respectively. In dimension two, the existence of
global weak solution of (\ref{1.1}), under the condition that the image of $d_0$ is contained in $\mathbb S^2_+$,
was obtained by \cite{JSW}. In dimension three, the local existence of strong solutions of (\ref{1.1}) has been studied by
\cite{HWW1} and \cite{HWW2}.  The compressible limit of compressible nematic liquid crystal flow (\ref{1.1}) has been
studied by \cite{DHXWZ}. We also mention a related work \cite{LZZ}. When considering the compressible nematic liquid crystal flow (\ref{1.1}) under the assumption that the director $d$ has variable degree of orientations, the global existence of weak solutions in dimension three has been obtained by  \cite{LQ} and \cite{WY1} respectively.

In this paper, we are mainly interested in  the existence of finite energy weak solutions of (\ref{1.1})-(\ref{1.2}) in dimension three.
Our main states as follows.
\begin{thm}\label{th:1.1} Assume $\gamma>\frac32$ and the condition (\ref{1.3}) holds. If the initial data $(\rho_0,m_0,d_0)$ satisfies the following condition:
\begin{equation}
\label{1.4}0\le \rho_0\in L^{\gamma}(\Omega),
\end{equation}
\begin{equation}
\label{1.5}m_0\in L^{\frac{2\gamma}{\gamma+1}}(\Omega),\ \ m_0\chi_{\{\rho_0=0\}}=0,\ \ \frac{|m_0|^2}{\rho_0}\chi_{\{\rho_0>0\}}\in L^1(\Omega),
\end{equation}
and
\begin{equation}
\label{1.6}d_0\in H^1(\Omega,\mathbb S^2), \ \mbox{with}\ d_0(x)\in\mathbb S_+^2
\ \mbox{a.e.}\ x\in \Omega.
\end{equation}
Then there exists a global finite energy weak solution $(\rho,u,d):\Omega\times [0,+\infty)
\to\mathbb R_+\times \mathbb R^3\times \mathbb S^2$ to the initial and boundary
value problem (\ref{1.1})-(\ref{1.2}) such that
\begin{itemize}
\item[(i)] $d=(d^1,d^2,d^3)\in L^{\infty}((0,+\infty), H^1(\Omega,\mathbb S^2))$
and $d^3(x,t)\geq0$ a.e. $(x,t)\in\Omega\times(0,+\infty)$.
\item [(ii)] it holds
\begin{equation}\label{stationary}
\int_0^\infty \eta(t)\int_\Omega \Big(\nabla d\odot\nabla d-\frac12|\nabla d|^2\mathbb I_3\Big): \nabla X
+\int_0^\infty \eta(t)\int_\Omega \big\langle \partial_t d+u\cdot\nabla d, X\cdot\nabla d\big\rangle=0,
\end{equation}
for any  $X\in C_0^1(\Omega,\mathbb R^3)$ and $\eta\in C_0^1((0,+\infty))$.
\end{itemize}
\end{thm}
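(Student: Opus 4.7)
The plan is to construct $(\rho,u,d)$ as the limit of a family of smooth approximate solutions built by combining the multi-level regularization scheme of Feireisl--Novotn\'y--Petzeltov\'a (a Faedo--Galerkin truncation for the velocity, vanishing artificial viscosity $\epsilon\Delta\rho$ in the continuity equation, and vanishing artificial pressure $\delta\rho^\beta$ with $\beta$ large) with a Ginzburg--Landau relaxation of the sphere constraint,
\begin{equation*}
\partial_t d_\epsilon+u_\epsilon\cdot\nabla d_\epsilon=\Delta d_\epsilon-\frac{1}{\epsilon^2}\bigl(|d_\epsilon|^2-1\bigr)d_\epsilon,
\end{equation*}
with boundary datum $d_0$. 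At each level, existence of an approximate solution follows by combining standard parabolic theory for $d_\epsilon$ with Feireisl's machinery for the compressible fluid; the director stress $\nabla\cdot(\nabla d_\epsilon\odot\nabla d_\epsilon-\tfrac12|\nabla d_\epsilon|^2\mathbb{I}_3)$ in the momentum equation is absorbed via the parabolic smoothing of $d_\epsilon$.

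Next, I would extract uniform estimates from the natural energy inequality. Testing the approximate momentum equation by $u_\epsilon$ and the director equation by $-\Delta d_\epsilon+\frac{1}{\epsilon^2}(|d_\epsilon|^2-1)d_\epsilon$, and combining with the renormalized continuity equation, yields
\begin{equation*}
\mathbf{E}_\epsilon(t)+\int_0^t\!\!\int_\Omega\Bigl(\mu|\nabla u_\epsilon|^2+\widetilde{\mu}|\nabla\cdot u_\epsilon|^2+\Bigl|\Delta d_\epsilon-\frac{1}{\epsilon^2}(|d_\epsilon|^2-1)d_\epsilon\Bigr|^2\Bigr)\le\mathbf{E}_\epsilon(0),
\end{equation*}
where $\mathbf{E}_\epsilon$ is augmented by the Ginzburg--Landau term $\int_\Omega\frac{1}{4\epsilon^2}(|d_\epsilon|^2-1)^2$. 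A scalar maximum principle applied to $|d_\epsilon|^2-1$ gives $|d_\epsilon|\le 1$ a.e., so $|d_\epsilon|\to 1$ in $L^\infty_tL^2_x$ as $\epsilon\to 0$. To propagate the hemisphere condition, observe that $d_\epsilon^3$ satisfies
\begin{equation*}
(\partial_t+u_\epsilon\cdot\nabla-\Delta)d_\epsilon^3=-\frac{1}{\epsilon^2}(|d_\epsilon|^2-1)d_\epsilon^3;
\end{equation*}
since the factor $|d_\epsilon|^2-1\le 0$, the right-hand side is nonpositive wherever $d_\epsilon^3\le 0$, and a parabolic comparison starting from $d_0^3\ge 0$ propagates $d_\epsilon^3\ge 0$ throughout. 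Standard Bogovskii-type test functions additionally yield the higher pressure integrability $\rho_\epsilon^{\gamma+\theta}\in L^1_{\mathrm{loc}}(Q_T)$ required to close the limit.

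The main obstacle is the passage to the limit in the director nonlinearities: identifying $\nabla d\odot\nabla d$ in the momentum equation and recovering $|\nabla d|^2 d$ in the director equation both require strong $L^2_{\mathrm{loc}}$ convergence of $\nabla d_\epsilon$, which is not automatic in three dimensions. This is precisely where the hemisphere hypothesis (\ref{1.6}) enters, following the strategy of Lin--Wang \cite{LW1} for the incompressible case: the inequality $d_\epsilon^3\ge 0$ keeps the image of $d_\epsilon$ in the strictly convex hemisphere $\mathbb S^2_+$, so one can compose $d_\epsilon$ with a bi-Lipschitz retraction of $\mathbb S^2_+$ onto a planar disk, reduce the sphere-valued heat flow to a transported parabolic system with $L^1$ right-hand side via the Hardt--Lin--Kinderlehrer geometric structure, and upgrade the weak $L^2$ compactness of $\nabla d_\epsilon$ to strong compactness on compact subsets of $Q_T$. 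Once strong convergence of $\nabla d_\epsilon$ is at hand, the compressible Navier--Stokes side is closed by the Lions--Feireisl effective viscous flux identity and Feireisl's monotonicity argument, yielding strong $L^1$ convergence of $\rho_\epsilon^\gamma$ and the renormalized continuity equation; the remaining parameters $\epsilon$ and $\delta$ are then removed in the standard two-stage manner.

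Finally, to derive (\ref{stationary}), I would run an inner-variation argument: take the inner product of the director equation $\partial_t d+u\cdot\nabla d=\Delta d+|\nabla d|^2 d$ with $X\cdot\nabla d$ for $X\in C_0^1(\Omega,\mathbb R^3)$. The contribution $\langle|\nabla d|^2 d,X\cdot\nabla d\rangle$ vanishes pointwise because $|d|=1$ forces $d\cdot\partial_k d=0$, while integrating $\langle\Delta d,X\cdot\nabla d\rangle$ by parts produces exactly
\begin{equation*}
-\int_\Omega\Big(\nabla d\odot\nabla d-\tfrac12|\nabla d|^2\mathbb{I}_3\Big):\nabla X.
\end{equation*}
Multiplying by $\eta(t)\in C_0^1((0,+\infty))$ and integrating in time gives (\ref{stationary}). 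To justify this for the weak solution, one performs the calculation at the approximation level where $d_\epsilon$ is smooth and then passes to the limit using the strong $L^2$ convergence of $\nabla d_\epsilon$ established above together with the $L^2$ convergence of $\partial_t d_\epsilon+u_\epsilon\cdot\nabla d_\epsilon$ inherited from the energy estimate.
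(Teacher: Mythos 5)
Your outline agrees with the paper's on most of the machinery: the three-level Feireisl-type approximation combined with the Ginzburg--Landau relaxation, the energy inequality, the maximum-principle propagation of $|d_\epsilon|\le 1$ and $d_\epsilon^3\ge 0$ (Lemmas \ref{le:2.2}--\ref{le:2.3}), the Bogovskii estimate for higher pressure integrability, the Lions--Feireisl effective viscous flux identity and oscillation-defect argument for $\rho_\epsilon\to\rho$, and the inner-variation derivation of \eqref{stationary} at the $\epsilon$-level followed by passage to the limit. These are all sound and essentially the same route the authors take.

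The genuine gap is in the decisive step: establishing strong convergence $\nabla d_\epsilon\to\nabla d$ in $L^2_{\rm loc}(Q_T)$. You propose to compose $d_\epsilon$ with a bi-Lipschitz retraction of $\mathbb S^2_+$ onto a planar disk, invoke a ``Hardt--Lin--Kinderlehrer geometric structure'' to get a parabolic system with $L^1$ right-hand side, and then ``upgrade weak to strong'' compactness of the gradients. This does not work as stated, for two reasons. First, $L^1$ control of a parabolic right-hand side combined with Aubin--Lions gives compactness of $d_\epsilon$ in $L^2$ but says nothing about compactness of $\nabla d_\epsilon$; gradients of uniformly energy-bounded maps can concentrate on lower-dimensional sets, and this is exactly the three-dimensional obstruction. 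Second, Hardt--Lin--Kinderlehrer concerns regularity of energy-minimizing (static) configurations with small oscillation, whereas here $d_\epsilon$ is only an approximated harmonic map with a possibly large $L^2$ tension field $\tau_\epsilon=\Delta d_\epsilon+\epsilon^{-2}(1-|d_\epsilon|^2)d_\epsilon$ on good time slices — no minimality or smallness is available, and the Ginzburg--Landau term degenerates as $\epsilon\to 0$. What the paper actually uses, and what is needed, is the Lin--Wang compactness Theorem \ref{th:2.1}: on time slices where $\tau_\epsilon$ is bounded in $L^2$, the family $\{d_\epsilon(t)\}$ lies in ${\bf X}(L_1,L_2,a;\Omega)$, and precompactness in $H^1_{\rm loc}$ follows from an almost-monotonicity formula, an $\delta_0$-regularity theorem, and a blow-up analysis that produces a defect measure supported on a concentration set and rules out bubbling of nonconstant harmonic $\mathbb S^2$'s precisely because such a bubble cannot have image in $\mathbb S^2_{-1+a}$. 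Your hemisphere hypothesis enters the argument at this last stage, not through a retraction onto a disk. Without some substitute for the monotonicity/blow-up analysis, the weak $L^2$ convergence of $\nabla d_\epsilon$ cannot be upgraded, and then neither the stress tensor $\nabla d\odot\nabla d$ in $(\ref{limit_eqn3.1})_2$ nor the term $|\nabla d|^2 d$ in $(\ref{limit_eqn3.1})_3$ can be identified, nor can \eqref{stationary} be obtained in the limit.

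A secondary but nontrivial point: the passage from strong $H^1_{\rm loc}$ compactness on individual good slices to convergence in $L^2([0,T],H^1_{\rm loc}(\Omega))$ requires the slice-selection argument of \S5.1 (the sets $G_\Lambda^T$, $B_\Lambda^T$, and a contradiction argument on bad sequences of slices combined with the measure estimate \eqref{est_badset}); this bridge is not merely a formality and should be acknowledged in any complete proof.
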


The main ideas of proof of Theorem 1.1 rely on (i) the precompactness results, due to Lin-Wang \cite{LW1},
on approximated Ginzburg-Landau equations $\{d_\epsilon\}$ with bounded energies, bounded $L^2$-tension
fields, and the condition $|d_\epsilon|\le 1$ and $d_\epsilon^3\ge -1+\delta$ for $\delta>0$, and
(ii) suitable adaption of compactness properties of renormalized solutions of compressible Navier-Stokes
equations established by Lions \cite{L1} and Feireisl and his collaborators \cite{F1}, \cite{FP}, and
\cite{FNP1}.

For any global finite energy weak solutions of (\ref{1.1}) and (\ref{1.2}) that satisfies the properties stated in
Theorem \ref{th:1.1}, we are able to establish the following
preliminary result on its large time asymptotic behavior.

\begin{cor}\label{long-time} Under the same assumptions of Theorem \ref{th:1.1},
let $(\rho, u, d):\Omega\times [0,+\infty)\to\mathbb R_+\times \mathbb R^3\times
\mathbb S^2$ be {\rm any} global finite energy weak solution of (\ref{1.1}) and (\ref{1.2}) that
satisfies the properties of Theorem \ref{th:1.1}.
Then there exist $t_n\rightarrow\infty$ and a harmonic map
$d_\infty\in H^1\cap C^\infty(\Omega, \mathbb S^2_+)$, with $d_\infty=d_0$
on $\partial\Omega$, such that
\begin{equation}\label{uniform_limit}
\Big(\rho(\cdot, t_n), u(\cdot, t_n), d(\cdot, t_n)\Big)\rightarrow \Big(\rho_{0,\infty}, 0, d_\infty\Big)
\ {\rm{in}}\ L^\gamma(\Omega)\times L^{p}(\Omega)\times H^1_{\rm{loc}}(\Omega),
\end{equation}
for any $1<p<6$, where $\rho_{0,\infty}:=\displaystyle\frac{1}{|\Omega|}\int_\Omega \rho_0>0$
is the average of the initial mass.
\end{cor}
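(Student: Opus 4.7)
The strategy is to exploit the dissipation structure of the energy inequality (\ref{1.9}) to extract a vanishing-dissipation time sequence, and then combine the Lin-Wang precompactness for approximate harmonic maps \cite{LW1} with the Lions-Feireisl renormalized-solution machinery for the density \cite{L1}. Since $\mathbf{E}(t)$ is essentially nonincreasing by (\ref{1.9}) and the integrated dissipation is bounded by $\mathbf{E}(0)<\infty$, the function $\phi(t):=\int_\Omega(\mu|\nabla u|^2+\widetilde\mu|\nabla\cdot u|^2+|\triangle d+|\nabla d|^2 d|^2)$ lies in $L^1(0,\infty)$, so one can extract $t_n\to\infty$ with $\phi(t_n)\to 0$. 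At these times, Poincar\'e combined with the boundary condition $u|_{\partial\Omega}=0$ gives $\|u(\cdot,t_n)\|_{H^1_0(\Omega)}\to 0$, hence strong convergence $u(\cdot,t_n)\to 0$ in $L^p(\Omega)$ for every $1<p<6$ by Sobolev embedding. The sequence $d(\cdot,t_n)$ is bounded in $H^1(\Omega,\mathbb S^2)$ with vanishing tension field $\triangle d(\cdot,t_n)+|\nabla d(\cdot,t_n)|^2 d(\cdot,t_n)\to 0$ in $L^2(\Omega)$ and satisfies $d^3(\cdot,t_n)\ge 0$ by Theorem \ref{th:1.1}(i); the Lin-Wang precompactness result then yields, up to subsequence, $d(\cdot,t_n)\to d_\infty$ strongly in $H^1_{\rm loc}(\Omega,\mathbb S^2)$, with $d_\infty$ a weakly harmonic map into $\mathbb S^2_+$. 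Trace continuity preserves $d_\infty=d_0$ on $\partial\Omega$, and the Hardt-Lin-Kinderlehrer regularity theory \cite{HLK} for harmonic maps with hemispherical image yields $d_\infty\in C^\infty(\Omega)$.

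For the density, the uniform $L^\gamma$ bound gives $\rho(\cdot,t_n)\rightharpoonup\rho_\infty$ weakly in $L^\gamma(\Omega)$ up to subsequence. To identify $\rho_\infty$ I consider the shifted solutions $(\rho_n,u_n,d_n)(x,t):=(\rho,u,d)(x,t_n+t)$ on $\Omega\times(0,1)$ and pass the momentum equation in (\ref{1.1}) to the limit: the convective terms $\partial_t(\rho_n u_n)+\nabla\cdot(\rho_n u_n\otimes u_n)$ and the Lam\'e term $\mathcal L u_n$ all vanish because $u_n\to 0$ strongly in $L^2((0,1);H^1_0(\Omega))$, while the Ericksen stress converges to $\nabla\cdot(\nabla d_\infty\odot\nabla d_\infty-\tfrac12|\nabla d_\infty|^2\mathbb I_3)$, which vanishes identically by the conservation law obtained by contracting $\triangle d_\infty+|\nabla d_\infty|^2 d_\infty=0$ with $\partial_{x_i}d_\infty$. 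Thus $\nabla\overline{\rho^\gamma}=0$ in $\mathcal D'(\Omega)$, where $\overline{\rho^\gamma}$ denotes the weak $L^1$ limit of $\rho_n^\gamma$, so $\overline{\rho^\gamma}$ equals a constant $C$. Applying the Lions-Feireisl effective viscous flux identity together with the renormalized continuity equation (\ref{1.7}) in this asymptotic regime upgrades weak to strong convergence by forcing $\overline{\rho^\gamma}=\rho_\infty^\gamma$, so $\rho_\infty\equiv C^{1/\gamma}$ is constant. Conservation of mass $\int_\Omega\rho(\cdot,t)=\int_\Omega\rho_0$, immediate from $(\ref{1.1})_1$ and $u|_{\partial\Omega}=0$, then identifies $\rho_\infty=|\Omega|^{-1}\int_\Omega\rho_0=\rho_{0,\infty}$.

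The main obstacle is the final identification $\overline{\rho^\gamma}=\rho_\infty^\gamma$: weak $L^\gamma$ convergence of $\rho_n$ is not by itself sufficient, and the argument requires importing the full Lions-Feireisl compactness machinery for the density to the long-time asymptotic setting. The crucial simplifying feature here is the strong convergence $u_n\to 0$, which kills the convective terms and makes the effective viscous flux reduce essentially to $a\rho_n^\gamma$, so the propagation of density oscillations through the renormalized continuity equation becomes substantially more tractable than in the finite-time theory.
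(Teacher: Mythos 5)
Your overall strategy — energy dissipation $\Rightarrow$ vanishing $\nabla u$ and tension, Lin--Wang precompactness for $d$, Lions--Feireisl machinery for $\rho$, and mass conservation to pin down the constant — is the same as the paper's. However, there is a genuine gap in your treatment of the director field: Theorem~\ref{th:2.2} requires not only (i)~$d^3\geq -1+a$, (ii)~bounded Dirichlet energy, and (iii)~an $L^2$ bound on the tension field, but also (iv)~the almost energy monotonicity inequality~(\ref{almostmono}). You verify (i)--(iii) and then invoke the compactness, but (iv) is not automatic for an arbitrary time slice of a weak solution. This is exactly why Theorem~\ref{th:1.1}(ii) states the stationarity identity~(\ref{stationary}): the paper's Claim~2 disintegrates that identity to produce, for a.e.\ $t$ outside a null set $Z\subset G_\Lambda$, the pointwise-in-time stationarity~(\ref{stationary2}), from which the monotonicity~(\ref{am}) follows (via \cite{LW5}). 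Without this step, Theorem~\ref{th:2.2} does not apply and the strong $H^1_{\rm loc}$ convergence of $d(\cdot,t_n)$ does not follow.

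This gap also interacts with your order of extraction. You first pick $t_n\to\infty$ with $\phi(t_n)\to 0$ and only then consider shifted solutions. But the monotonicity inequality (derived from the stationarity identity) holds only for a.e.\ time outside a null set $Z$, so your chosen $t_n$ may well land in $Z$. The paper avoids this by going in the opposite order: it shifts to windows $Q_1=\Omega\times(m,m+1)$, establishes all the space--time convergences on the window (including identifying $\rho_\infty$ and $d_\infty$ and verifying the monotonicity a.e.\ in the good slices $G_\Lambda\setminus Z$), and only at the very end applies Fubini to extract a single good time $t_m\in(m,m+1)$ simultaneously avoiding the bad set and realizing all the convergences. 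You need to adopt this window-first ordering, or else argue explicitly why one can select $t_n$ in $G_\Lambda\setminus Z$ with $\phi(t_n)\to 0$ at the outset. Your density argument via the div--curl lemma is legitimate and matches the second of the two methods the paper sketches for Claim~3, but the consistency between the weak $L^\gamma(\Omega)$ limit of $\rho(\cdot,t_n)$ and the $L^\gamma(Q_1)$ limit of the shifted $\rho_n$ is also something the window-first ordering handles cleanly and which you leave implicit.
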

\begin{rem} {\rm It is a very interesting question to ask whether the convergence in (\ref{uniform_limit}) holds for
$t\rightarrow +\infty$. We plan to address it in a future work. We would like to point out
that such a property has been established by \cite{FP} for the compressible Navier-Stokes equation.
For the compressible flow of nematic liquid crystals with variable degree of orientations,  see Wang-Yu
\cite{WY1} for the large time asymptotic behavior of global weak solutions.}

{\rm The paper is written as follows. In section 2, we provide some preliminary estimates of (\ref{1.1}).
In section 3, we briefly review a compactness theorem due to Lin and Wang \cite{LW1}. In section 4,
we review the main results by Wang-Yu \cite{WY1} on nematic liquid crystal flows with variable lengths
of directors. In section 5, we prove Theorem 1.1. In section 6, we prove Corollary 1.2.}

\end{rem}

\setcounter{section}{1}
\setcounter{equation}{0}
\section{Global energy inequality and estimates based on the maximum principle}

In this section, we will provide several basic properties of the hydrodynamic flow of
compressible nematic liquid crystals (\ref{1.1}) and (\ref{1.2}). First, we
will derive an energy equality for sufficiently smooth solutions of (\ref{1.1}) and (\ref{1.2}).

\begin{lemma}\label{le:2.1} Assume the conditions (\ref{1.3}), (\ref{1.4}), (\ref{1.5}),
and (\ref{1.6}) hold.  For $0<T\leq+\infty,$ if $(\rho,u,d)\in C^1(Q_T,\mathbb R_+)
\times C^2(Q_T,\mathbb R^3)\times C^2(Q_T,\mathbb S^2)$
is a solution of  (\ref{1.1}) and (\ref{1.2}), then the following energy equality
\beq\label{energy1}
{\bf E}(t)+\int_0^t\int_{\Omega}\big(\mu|\nabla u|^2
+\widetilde{\mu}|\nabla\cdot u|^2+|\triangle d+|\nabla d|^2d|^2\big)
={\bf E}(0),
\eeq
holds for any $0\le t<T$, where ${\bf E}(t)$ and $\mathbf{E}(0)$ are given by (\ref{energyt}) and (\ref{energy0}) respectively.
 \end{lemma}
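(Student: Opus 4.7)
The plan is to derive the energy identity by testing each equation of the system against the natural multiplier that reproduces the corresponding energy density, then using the coupling structure to cancel the cross terms. Since everything is assumed smooth and satisfies the Dirichlet boundary conditions, no regularization of the test functions is needed.

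First I would multiply the momentum equation $(\ref{1.1})_2$ by $u$ and integrate over $\Omega$. The continuity equation $(\ref{1.1})_1$ lets me rewrite $\partial_t(\rho u)\cdot u+\nabla\cdot(\rho u\otimes u)\cdot u=\partial_t\bigl(\tfrac12\rho|u|^2\bigr)+\nabla\cdot\bigl(\tfrac12\rho|u|^2 u\bigr)$, and the second term integrates to zero because $u=0$ on $\partial\Omega$. For the pressure term, applying the renormalized continuity identity (\ref{1.7}) with $b(\rho)=\tfrac{a\rho^{\gamma}}{\gamma-1}$ (which is legitimate since $\rho$ is smooth) yields $a\rho^{\gamma}\nabla\cdot u=-\partial_t\bigl(\tfrac{a\rho^{\gamma}}{\gamma-1}\bigr)-\nabla\cdot\bigl(\tfrac{a\rho^{\gamma}}{\gamma-1}u\bigr)$, so $\int_\Omega a\nabla\rho^\gamma\cdot u=\tfrac{d}{dt}\int_\Omega \tfrac{a\rho^\gamma}{\gamma-1}$. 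An integration by parts on the Lam\'e operator, again using $u|_{\partial\Omega}=0$, gives $\int_\Omega \mathcal{L}u\cdot u=-\int_\Omega\bigl(\mu|\nabla u|^2+\widetilde{\mu}|\nabla\cdot u|^2\bigr)$.

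Next I would dot the director equation $(\ref{1.1})_3$ with $-(\Delta d+|\nabla d|^2 d)$ and integrate. Since $|d|=1$, we have $d\cdot\partial_t d=0$ and $d\cdot(u\cdot\nabla d)=\tfrac12 u\cdot\nabla|d|^2=0$, which kills the terms containing the factor $|\nabla d|^2 d$ paired against $\partial_t d$ and $u\cdot\nabla d$. The time derivative term becomes $-\int_\Omega\partial_t d\cdot\Delta d=\tfrac{d}{dt}\int_\Omega\tfrac12|\nabla d|^2$, because $\partial_t d=0$ on $\partial\Omega$ (the boundary value is the time-independent $d_0$).

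The key cancellation is the computation of the Ericksen stress term. Writing it in coordinates,
\begin{equation*}
\bigl[\nabla\cdot\bigl(\nabla d\odot\nabla d-\tfrac12|\nabla d|^2\mathbb{I}_3\bigr)\bigr]_i=\partial_j\langle\partial_i d,\partial_j d\rangle-\partial_i\bigl(\tfrac12|\nabla d|^2\bigr)=\langle \partial_i d,\Delta d\rangle,
\end{equation*}
so that $\int_\Omega \nabla\cdot\bigl(\nabla d\odot\nabla d-\tfrac12|\nabla d|^2\mathbb{I}_3\bigr)\cdot u=\int_\Omega\langle u\cdot\nabla d,\Delta d\rangle$, which exactly matches the convective contribution coming from the $d$-equation. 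Adding the two identities therefore eliminates the cross terms and yields, after integrating from $0$ to $t$,
\begin{equation*}
\mathbf{E}(t)+\int_0^t\!\!\int_\Omega\bigl(\mu|\nabla u|^2+\widetilde{\mu}|\nabla\cdot u|^2+|\Delta d+|\nabla d|^2 d|^2\bigr)=\mathbf{E}(0).
\end{equation*}
The only conceptual step worth pausing on is the Ericksen-stress identity together with the two $|d|=1$ cancellations; everything else is bookkeeping. Under the smoothness hypotheses there is no distributional subtlety, so the obstacle is purely in correctly tracking the algebra of the coupling.
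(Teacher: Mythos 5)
Your proof is correct and follows essentially the same route as the paper: test $(\ref{1.1})_2$ against $u$, test $(\ref{1.1})_3$ against $-(\triangle d+|\nabla d|^2 d)$, use $|d|=1$ to kill the cross-terms involving $|\nabla d|^2 d$, and exploit the Ericksen-stress identity $\nabla\cdot(\nabla d\odot\nabla d-\frac12|\nabla d|^2\mathbb I_3)=\langle\triangle d,\nabla d\rangle$ to cancel the coupling terms. The only cosmetic difference is that you frame the pressure computation as an application of the renormalized identity (\ref{1.7}) with $b(\rho)=\frac{a\rho^\gamma}{\gamma-1}$, whereas the paper carries out the equivalent direct manipulation $\int_\Omega\rho^\gamma\nabla\cdot u=-\frac{d}{dt}\int_\Omega\frac{\rho^\gamma}{\gamma}+\int_\Omega\frac{\rho^\gamma}{\gamma}\nabla\cdot u$.
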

 \begin{proof} Multiplying ${(\ref{1.1})_2}$ by $u$, integrating the resulting equation
over $\Omega$, applying integration by parts, and using $(\ref{1.1})_1$, we obtain
 \beq\nonumber&&\frac{d}{dt}\int_{\Omega}\left(\frac12\rho|u|^2+\frac{a\rho^{\gamma}}{\gamma-1}\right)+\int_{\Omega}\left(\mu|\nabla u|^2+\widetilde{\mu}|\nabla\cdot u|^2\right)\\
 \label{ener1}&&=-\int_{\Omega}\nabla\cdot(\nabla d\odot\nabla d-\frac12|\nabla d|^2\mathbb I_3) u,
\eeq
where we have used the fact
\begin{eqnarray*}
&&\int_\Omega \rho^\gamma\nabla\cdot u
=\int_\Omega \rho^{\gamma-1}\rho\nabla\cdot u=-\int_\Omega (\partial_t\rho+u\cdot\nabla\rho)\rho^{\gamma-1}
=-\frac{d}{dt}\int_\Omega \frac{\rho^\gamma}{\gamma}+\int_\Omega \frac{\rho^\gamma}{\gamma}
\nabla\cdot u,
\end{eqnarray*}
so that
$$-a\int_\Omega \rho^\gamma\nabla\cdot u=\frac{d}{dt}\int_\Omega \frac{a\rho^\gamma}{\gamma-1}.
$$
Direct calculations show
$$\nabla\cdot(\nabla d\odot\nabla d-\frac12|\nabla d|^2\mathbb{I}_3)=\langle\triangle d,\nabla d\rangle.
$$
Note also, since $|d|=1$, that we have
$$\langle\partial_t d, d\rangle=\langle\nabla d, d\rangle=0,$$
and hence
\begin{equation}\label{2.2.0}
-\int_{\Omega}\nabla\cdot(\nabla d\cdot\nabla d-\frac12|\nabla d|^2\mathbb I_3) u
=-\int_\Omega u\cdot\langle \Delta d+|\nabla d|^2 d, \nabla d\rangle.
\end{equation}
Multiplying $(\ref{1.1})_3$ by $-(\triangle d+|\nabla d|^2d)$ and
integrating over $\Omega$  yields that
\beq\label{ener2}\frac{d}{dt}\int_{\Omega}\frac12\big|\nabla d\big|^2
+\int_{\Omega}\big|\triangle d+|\nabla d|^2d\big|^2
=\int_{\Omega}u\cdot\langle\triangle d+|\nabla d|^2d, \nabla d\rangle.
\eeq
 Putting (\ref{ener1}), (\ref{2.2.0}), and (\ref{ener2}) together
 implies
\beq\frac{d}{dt}{\bf E}(t)+\int_{\Omega}\big(\mu|\nabla u|^2+\widetilde{\mu}|\nabla\cdot u|^2+|\triangle d+|\nabla d|^2d|^2\big)=0.
\eeq
This, after integrating over $t$, implies (\ref{energy1}).
\end{proof}

 In order to construct global finite energy weak solutions to (\ref{1.1})-(\ref{1.2}), we need some important
estimates of transported Ginzburg-Landau equations based on the maximum principle.
 \begin{lemma}\label{le:2.2} For $\epsilon>0$, $T>0,$ and $u_\epsilon\in L^2([0,T], L^\infty(\Omega,\mathbb R^3))$,
assume $d_\epsilon\in L^2([0,T],H^1(\Omega,\mathbb R^3))$, with $(1-|d_\epsilon|^2)\in L^2(Q_T)$,
solves the transported Ginzburg-Landau equation:
 \begin{equation}\label{2.1}
\begin{cases}
\partial_td_\epsilon+u_\epsilon\cdot\nabla d_{\epsilon}=\triangle d_{\epsilon}+\frac{1}{\epsilon^2}(1-|d_\epsilon|^2)d_{\epsilon},& \  {\rm{in}}\ Q_T,\\
\qquad \qquad \ \ \ \ d_\epsilon=g_\epsilon,& \ {\rm{on}}\ \partial_pQ_T.
\end{cases}
\end{equation}
If $g_\epsilon\in H^1(\Omega,\mathbb R^3)$ satisfies $|g_{\epsilon}(x)|\leq 1$ for a.e. $x\in\Omega,$
then
$$|d_{\epsilon}(x,t)|\leq1\ {\rm{for\ a.e.}}\ (x,t)\in Q_T.$$
 \end{lemma}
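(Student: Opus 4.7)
The strategy is a standard Stampacchia--type truncation argument applied to $|d_\epsilon|^2$: derive an evolution equation for the super-unit part of $|d_\epsilon|^2$, exploit the good sign of the nonlinearity on the set $\{|d_\epsilon|>1\}$, absorb the transport contribution by Young's inequality, and close with Gr\"onwall.

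Concretely, I would first take the inner product of the equation in (\ref{2.1}) with $d_\epsilon$ and use $\langle \Delta d_\epsilon, d_\epsilon\rangle = \tfrac12 \Delta |d_\epsilon|^2 - |\nabla d_\epsilon|^2$ to obtain an equation for $\psi_\epsilon := |d_\epsilon|^2 - 1$:
\begin{equation*}
\partial_t \psi_\epsilon + u_\epsilon\cdot\nabla \psi_\epsilon - \Delta \psi_\epsilon
= -2|\nabla d_\epsilon|^2 - \frac{2}{\epsilon^2}\psi_\epsilon(\psi_\epsilon+1).
\end{equation*}
The boundary condition $|g_\epsilon|\le 1$ gives $\psi_\epsilon\le 0$ on $\partial_p Q_T$, so the positive part $\psi_\epsilon^+ := \max\{\psi_\epsilon,0\}$ belongs to $L^2([0,T],H_0^1(\Omega))$ and vanishes at $t=0$. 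This is the test function to use; the chain rule $\nabla \psi_\epsilon^+ = \chi_{\{\psi_\epsilon>0\}}\nabla\psi_\epsilon$ is available by Stampacchia's lemma given the assumed regularity of $d_\epsilon$.

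Multiplying the identity for $\psi_\epsilon$ by $\psi_\epsilon^+$ and integrating over $\Omega$, the diffusion term integrates by parts into $\int|\nabla\psi_\epsilon^+|^2$, while on the set $\{\psi_\epsilon>0\}$ we have $\psi_\epsilon(\psi_\epsilon+1)\ge 0$ and $|\nabla d_\epsilon|^2\ge 0$, so the right-hand side contributes a nonpositive quantity and may be dropped. For the transport term, I would rewrite
\begin{equation*}
\int_\Omega (u_\epsilon\cdot\nabla\psi_\epsilon)\,\psi_\epsilon^+
= \int_\Omega u_\epsilon\cdot\nabla\psi_\epsilon^+\cdot \psi_\epsilon^+,
\end{equation*}
and bound it, using $u_\epsilon\in L^\infty(\Omega)$ and Young's inequality, by $\tfrac12\|\nabla\psi_\epsilon^+\|_2^2 + \tfrac12\|u_\epsilon\|_\infty^2\|\psi_\epsilon^+\|_2^2$. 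This absorbs half the diffusion and leaves
\begin{equation*}
\frac{d}{dt}\|\psi_\epsilon^+(t)\|_{L^2(\Omega)}^2
\le \|u_\epsilon(t)\|_{L^\infty(\Omega)}^2\, \|\psi_\epsilon^+(t)\|_{L^2(\Omega)}^2.
\end{equation*}
Since $u_\epsilon\in L^2([0,T],L^\infty)$, Gr\"onwall and the initial vanishing $\|\psi_\epsilon^+(0)\|_2=0$ yield $\psi_\epsilon^+\equiv 0$, which is the desired conclusion.

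The only delicate point I anticipate is the rigorous justification of the test-function computation at the level of the stated regularity $d_\epsilon\in L^2(H^1)$ with $(1-|d_\epsilon|^2)\in L^2(Q_T)$: one has to verify that $\partial_t d_\epsilon$ exists in a suitable dual space and that the Lions--Magenes type duality pairing gives the chain rule $\int_\Omega \partial_t \psi_\epsilon\cdot \psi_\epsilon^+ = \tfrac12\frac{d}{dt}\|\psi_\epsilon^+\|_2^2$. This is standard once one notes that the PDE provides $\partial_t d_\epsilon \in L^2((0,T), H^{-1}(\Omega)) + L^1$, but it is the step that requires care; all the remaining estimates are elementary.
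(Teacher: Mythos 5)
Your proposal captures the right idea, and it is in fact the same maximum--principle/Stampacchia argument the paper uses: derive a parabolic inequality for a nonnegative quantity that vanishes on $\partial_p Q_T$, exploit the good sign of $-2|\nabla d_\epsilon|^2 - \tfrac{2}{\epsilon^2}\psi_\epsilon(\psi_\epsilon+1)$ on $\{\psi_\epsilon>0\}$, absorb the transport term by Young, and close with Gr\"onwall. However, you have omitted the one technical device that makes the argument legitimate at the stated regularity, and you have misdiagnosed where the delicacy actually sits.

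The real problem is not the $\partial_t$ pairing. It is that $\psi_\epsilon^+=(|d_\epsilon|^2-1)^+$ is \emph{not} an admissible test function here. Its gradient is $\nabla\psi_\epsilon^+ = 2\chi_{\{|d_\epsilon|>1\}}\langle \nabla d_\epsilon, d_\epsilon\rangle$, and with only $d_\epsilon\in L^2([0,T],H^1)$ and $(1-|d_\epsilon|^2)\in L^2(Q_T)$ you cannot conclude $\nabla\psi_\epsilon^+\in L^2(Q_T)$: on the set $\{|d_\epsilon|>1\}$ the factor $|d_\epsilon|$ is exactly what you are trying to bound, so $\int|\nabla d_\epsilon|^2|d_\epsilon|^2$ may be infinite. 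The integration by parts producing $\int|\nabla\psi_\epsilon^+|^2$, and the subsequent Young absorption, are then of the form $\infty-\infty$. For the same reason the products $|\nabla d_\epsilon|^2\,\psi_\epsilon^+$ and $\tfrac1{\epsilon^2}\psi_\epsilon(\psi_\epsilon+1)\psi_\epsilon^+=\tfrac1{\epsilon^2}(\psi_\epsilon^+)^2|d_\epsilon|^2$ need not be integrable, so ``dropping'' them is not justified.

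The paper's fix is a second truncation: instead of $\psi_\epsilon^+$ it works with
\[
f_\epsilon^k \;=\; \min\bigl\{\psi_\epsilon^+,\,k^2-1\bigr\}
\;=\;
\begin{cases}
k^2-1, & |d_\epsilon|>k,\\
|d_\epsilon|^2-1, & 1<|d_\epsilon|\le k,\\
0, & |d_\epsilon|\le 1,
\end{cases}
\]
for arbitrary $k>1$. On the support of $\nabla f_\epsilon^k$ one has $|d_\epsilon|\le k$, so $|\nabla f_\epsilon^k|\le 2k|\nabla d_\epsilon|\in L^2(Q_T)$, $f_\epsilon^k$ is bounded, and every term in the tested identity (including the sign-definite source term, now cut off to $\{1<|d_\epsilon|\le k\}$) is finite. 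Your estimate then goes through verbatim for $f_\epsilon^k$ and Gr\"onwall gives $f_\epsilon^k\equiv 0$; letting $k\to\infty$ (or simply noting that $f_\epsilon^k=0$ for all $k>1$ forces $|d_\epsilon|\le 1$) finishes the proof. You should insert this $k$-level truncation before testing, and drop the suggestion that the only issue is a Lions--Magenes duality pairing for $\partial_t$.
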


\begin{proof} We will follow the proof of Lemma 2.1 of Lin-Wang \cite{LW1} with some modifications.
For any $k>1,$ define $f_\epsilon^k: Q_T\to \mathbb R_+$ by
 \beqno f_\epsilon^k=\begin{cases}
k^2-1, &\ \mbox{if}\ |d_\epsilon(x,t)|>k,\\
|d_\epsilon(x,t)|^2-1, & \ \mbox{if}\ 1<|d_\epsilon(x,t)|\leq k, \\
0,  & \ \mbox{if}\ |d_\epsilon(x,t)|\leq1.   \end{cases} \eeqno
By direct calculations, we have that $f_\epsilon^k$ satisfies, in the sense of distributions,
\beq\label{max1}\begin{cases}\partial_tf_\epsilon^k+u_\epsilon\cdot\nabla f_\epsilon^k=\triangle f_\epsilon^k
-2\chi_{\{1<|d_\epsilon|\leq k\}}\Big(|\nabla d_\epsilon|^2+\frac{1}{\epsilon^2}(|d_\epsilon|^2-1)|d_\epsilon|^2\Big)
\leq\triangle f_\epsilon^k \ {\rm{in}}\ Q_T,\\
\ \ \ \ \ \qquad\qquad f_\epsilon^k=0 \ \ \ \ \ \ \ \ \ \ \ \ \ \ \ \ \ \ \ \ \ \ \ \ \ \ \ \
\ \ \ \ \ \ \ \ \ \ \ \ \ \ \ \ \ \ \ \ \ \ \ \ \mbox{on} \ \partial_pQ_T.
\end{cases}
\eeq
Multiplying (\ref{max1}) by $f_\epsilon^k$ and integrating over $\Omega$, we obtain
\beqno\frac{d}{dt}\int_{\Omega}|f_\epsilon^k|^2+2\int_\Omega|\nabla f_\epsilon^k|^2&\leq& 2\int_{\Omega}u_\epsilon\cdot\nabla f_{\epsilon}^kf_\epsilon^k\\
&\leq&\int_{\Omega}|\nabla f_\epsilon^k|^2+\|u_\epsilon(t)\|_{L^\infty(\Omega)}^2\int_{\Omega}|f_\epsilon^k|^2.
\eeqno
Hence we have
\beq\label{max2}\frac{d}{dt}\int_{\Omega}|f_\epsilon^k|^2dx\leq 2\|u_\epsilon(\cdot)\|_{L^\infty(\Omega)}^2\int_{\Omega}|f_\epsilon^k|^2dx.\eeq
Since $u_\epsilon\in L^2([0,T], L^\infty(\Omega))$ and $f_\epsilon^k(x,0)=0$ for a.e.
$x\in\Omega$, applying Gronwall's inequality to (\ref{max2})
yields that
$f_\epsilon^k=0$ a.e. in $Q_T.$
By the definition of $f_\epsilon^k$,  this implies that $d_\epsilon\leq 1$ a.e. in $Q_T.$
\end{proof}

We also have the following lemma.

\begin{lemma}\label{le:2.3} For $\epsilon>0$, $T>0$, and $u_\epsilon\in L^2([0,T],L^\infty(\Omega,\mathbb R^3))$,
assume $d_\epsilon\in L^2([0, T], H^1(\Omega,\mathbb R^3))$, with $(1-|d_\epsilon|^2)\in L^2(Q_T)$,
solves the transported Ginzburg-Landau equation (\ref{2.1}).
If $g_\epsilon\in H^1(\Omega,\mathbb R^3)$ satisfies
$$|g_{\epsilon}(x)|\leq 1\ \mbox{and}\ g_\epsilon^3(x)\geq 0\ {\rm{for\ a.e.}}\  x\in\Omega,
$$
then
$$|d_{\epsilon}(x,t)|\leq1 \ {\rm{and}}\
 d_\epsilon^3(x,t)\geq 0
\ {\rm{for\ a.e.}}\ (x,t)\in Q_T.
$$
\end{lemma}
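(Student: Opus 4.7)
The plan is to deduce both conclusions by applying a parabolic maximum-principle/energy argument to the system $(\ref{2.1})$, leveraging Lemma~\ref{le:2.2} for the modulus bound and then exploiting the scalar structure of the third-component equation to control its negative part.

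First I would simply invoke Lemma~\ref{le:2.2} verbatim to conclude $|d_\epsilon(x,t)|\le 1$ a.e.\ in $Q_T$; this is immediate because the hypotheses on $g_\epsilon$ in Lemma~\ref{le:2.3} include $|g_\epsilon|\le 1$. The useful consequence I will carry into the second step is the pointwise bound $0\le 1-|d_\epsilon|^2\le 1$ a.e.\ in $Q_T$.

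For $d_\epsilon^3\ge 0$, I project the vector equation onto its third component:
\[
\partial_t d_\epsilon^3+u_\epsilon\cdot\nabla d_\epsilon^3=\Delta d_\epsilon^3+\frac{1}{\epsilon^2}(1-|d_\epsilon|^2)d_\epsilon^3\quad \text{in } Q_T,
\]
with $d_\epsilon^3=g_\epsilon^3\ge 0$ on $\partial_pQ_T$. Set $\varphi:=(d_\epsilon^3)^-=\max\{-d_\epsilon^3,0\}\ge 0$. Since $g_\epsilon^3\ge 0$, one has $\varphi=0$ on $\partial_pQ_T$, and $\varphi\in L^2([0,T],H^1_0(\Omega))$. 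Testing the third-component equation against $-\varphi$, using the chain rule $\nabla d_\epsilon^3=-\nabla\varphi$ and $\partial_t d_\epsilon^3=-\partial_t\varphi$ on $\{d_\epsilon^3<0\}$, and integrating by parts in the Laplacian term produces the identity
\[
\frac12\frac{d}{dt}\int_\Omega\varphi^2+\int_\Omega|\nabla\varphi|^2
=-\frac12\int_\Omega u_\epsilon\cdot\nabla\varphi^2+\frac{1}{\epsilon^2}\int_\Omega(1-|d_\epsilon|^2)\varphi^2.
\]

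The two right-hand terms are controlled crudely but sufficiently: by Cauchy--Schwarz and Young's inequality,
\[
\Bigl|\int_\Omega u_\epsilon\cdot\nabla\varphi^2\Bigr|\le 2\|u_\epsilon(t)\|_{L^\infty(\Omega)}\|\varphi\|_{L^2}\|\nabla\varphi\|_{L^2}\le \int_\Omega|\nabla\varphi|^2+\|u_\epsilon(t)\|_{L^\infty(\Omega)}^2\int_\Omega\varphi^2,
\]
while Step~1 gives $(1-|d_\epsilon|^2)\varphi^2\le\varphi^2$, so the Ginzburg--Landau contribution is dominated by $\frac{1}{\epsilon^2}\int\varphi^2$. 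Absorbing $\tfrac12\int|\nabla\varphi|^2$ into the dissipation term then yields
\[
\frac{d}{dt}\int_\Omega\varphi^2\le\Bigl(\|u_\epsilon(t)\|_{L^\infty(\Omega)}^2+\frac{2}{\epsilon^2}\Bigr)\int_\Omega\varphi^2.
\]
Since the bracketed factor is in $L^1([0,T])$ (by hypothesis on $u_\epsilon$ and because $\epsilon$ and $T$ are fixed) and $\varphi(\cdot,0)\equiv 0$, Gronwall's inequality forces $\varphi\equiv 0$, i.e.\ $d_\epsilon^3\ge 0$ a.e.\ in $Q_T$.

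The only mildly delicate point is the sign bookkeeping in the energy identity; in particular the Ginzburg--Landau reaction term appears with the \emph{wrong} sign on the right-hand side (it is not an immediate dissipation), so without the $L^\infty$ bound from Step~1 the argument would not close with a $t$-integrable Gronwall coefficient. The rest is a standard Stampacchia-type test against the negative part, entirely parallel to the proof of Lemma~\ref{le:2.2}.
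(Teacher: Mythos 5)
Your proof is correct and follows the same Stampacchia-plus-Gronwall strategy as the paper, with one cosmetic simplification: where the paper first substitutes $\widetilde{d_\epsilon^3}=e^{-t/\epsilon^2}d_\epsilon^3$ so that the resulting reaction coefficient $h_\epsilon=\frac{1}{\epsilon^2}(1-|d_\epsilon|^2)-\frac{1}{\epsilon^2}$ is nonpositive and can simply be dropped after testing, you keep the original equation and absorb the positive reaction coefficient $\frac{1}{\epsilon^2}(1-|d_\epsilon|^2)\le\frac{1}{\epsilon^2}$ into the Gronwall constant. These are the same argument viewed from two sides — the exponential substitution is precisely the integrating factor that Gronwall's inequality would produce — so your version is a legitimate and arguably cleaner route that avoids introducing the auxiliary function. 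One small inaccuracy in your closing remark: the $t$-integrability of the Gronwall coefficient does not actually depend on Lemma~\ref{le:2.2}, since $1-|d_\epsilon|^2\le 1$ holds unconditionally; Lemma~\ref{le:2.2} gives the lower bound $1-|d_\epsilon|^2\ge 0$, which is only needed to conclude the modulus estimate $|d_\epsilon|\le 1$ itself, not to close the Gronwall estimate for $(d_\epsilon^3)^-$. This does not affect the validity of the proof.
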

\begin {proof} We will modify the proof of Lemma 2.2 by Lin-Wang \cite{LW1}. First it follows from
Lemma \ref{le:2.2} that
$$0\leq\frac{1}{\epsilon^2}(1-|d_\epsilon|^2)\leq\frac{1}{\epsilon^2}.$$
Set $\widetilde{d_\epsilon^3}:=e^{-\frac{t}{\epsilon^2}}d_\epsilon^3$. Then we have $$\partial_t\widetilde{d_\epsilon^3}+u_\epsilon\cdot\nabla\widetilde{d_\epsilon^3}-\triangle\widetilde{d_\epsilon^3}
=h_\epsilon\widetilde{d_\epsilon^3},$$
where
$$h_\epsilon(x,t)=\Big(\frac{1}{\epsilon^2}(1-|d_\epsilon|^2)-\frac{1}{\epsilon^2}\Big)\leq 0
\ \ {\rm{a.e.}}\ (x,t)\in Q_T.
$$
Since $\widetilde{d_\epsilon^3}\geq0$ on $\partial_p Q_T$, we have that $\displaystyle (\widetilde{d_\epsilon^3})^{-}:=-\min\big\{\widetilde{d_\epsilon^3},0\big\}$
satisfies
\beq\label{max3}
\begin{cases}\partial_t(\widetilde{d_\epsilon^3})^{-}+u_{\epsilon}\cdot\nabla(\widetilde{d_\epsilon^3})^{-}
-\triangle(\widetilde{d_\epsilon^3})^{-}=h_\epsilon(\widetilde{d_\epsilon^3})^{-},  \ {\rm{in}} \ \ Q_T,\\
\ \ \quad\qquad\qquad\qquad\qquad\qquad (\widetilde{d_\epsilon^3})^{-}=0,  \ \ \ \ \ \ \ \ \ {\rm{on}}\
\partial_p Q_T.
\end{cases}
\eeq
Multiplying $(\ref{max3})_1$ by $(\widetilde{d_\epsilon^3})^{-}$ and integrating the resulting equation
over $\Omega$, we have
\beqno&&\frac{d}{dt}\int_{\Omega}|(\widetilde{d_\epsilon^3})^{-}|^2+2\int_{\Omega}|\nabla(\widetilde{d_\epsilon^3})^{-}|^2\\
&&=-2\int_{\Omega}u_\epsilon\cdot\nabla(\widetilde{d_\epsilon^3})^{-}(\widetilde{d_\epsilon^3})^{-}
+2\int_{\Omega}h_\epsilon|(\widetilde{d_\epsilon^3})^{-}|^2\\
&&\le -2\int_{\Omega}u_\epsilon\cdot\nabla(\widetilde{d_\epsilon^3})^{-}(\widetilde{d_\epsilon^3})^{-}\\
&&\leq\int_{\Omega}|\nabla(\widetilde{d_\epsilon^3})^{-}|^2+\|u_\epsilon(t)\|_{L^\infty(\Omega)}^2\int_{\Omega}|(\widetilde{d_\epsilon^3})^{-}|^2,
\eeqno
where we have used the fact that $h_\epsilon(x,t)\leq 0$ a.e. $(x,t) \in Q_T.$
Thus we have
\beqno\frac{d}{dt}\int_{\Omega}|(\widetilde{d_\epsilon^3})^{-}|^2\leq
\|u_\epsilon(t)\|_{L^\infty(\Omega)}^2\int_{\Omega}|(\widetilde{d_\epsilon^3})^{-}|^2.
\eeqno
Applying Gronwall's inequality and using the initial condition $(\widetilde{d_\epsilon^3})^{-}(x,0)=0$
a.e. $x\in\Omega$, we obtain that $\displaystyle (\widetilde{d_\epsilon^3})^{-}=0$ a.e. in $Q_T.$
Therefore  $\displaystyle d_\epsilon^3\geq0\ \ {\rm{a.e.}}\ \ Q_T.$
This completes the proof of Lemma \ref{le:2.3}.
\end{proof}

\setcounter{section}{2}
\setcounter{equation}{0}
\section{Review of Lin-Wang's compactness results}

In order to show that a family of global finite weak solutions $(\rho_\epsilon, u_\epsilon, d_\epsilon)$
to the Ginzburg-Landau approximation of compressible nematic liquid crystal flow converges
to  a global finite weak solution $(\rho, u, d)$ of the compressible nematic liquid crystal flow (\ref{1.1}) and
(\ref{1.2}), we need to establish the compactness of $d_\epsilon$ in $L^2_{\rm{loc}}([0,T], H^1_{\rm{loc}}(\Omega,\mathbb R^3))$.
Under suitable conditions, this has recently been achieved by Lin-Wang \cite{LW1} in their studies of
the existence of global weak solutions to the incompressible nematic liquid crystal flow.

Since such a compactness property also plays a crucial role in this paper,  we will state it and refer the interested
readers to the paper \cite{LW1} for more detail. For  $a\in (0,2],$ denote
$$\mathbb S^2_{-1+a}=\Big\{y=(y^1,y^2,y^3)\in \mathbb S^2\big|\ y^3\geq-1+a\Big\}.$$
For any $a\in(0,2],$ $L_1>0$ and $L_2>0,$ let
${\bf X}(L_1,L_2, a;\Omega)$ denote the set consisting of all
maps $d_\epsilon\in H^1(\Omega,\mathbb R^3)$, with $\epsilon\in(0,1],$
that are solutions of
\begin{equation}
\label{2.2}\triangle d_\epsilon+\frac{1}{\epsilon^2}(1-|d_\epsilon|^2)d_{\epsilon}=\tau_{\epsilon}
\ \ {\rm{in}} \ \ \Omega, \ {\rm{with}}\  \tau_\epsilon\in L^2(\Omega,\mathbb R^3),
\end{equation}
such that for all $0<\epsilon\le 1$, the following properties hold:
\begin{itemize}
\item [(i)] $|d_\epsilon|\leq 1$ and $d_{\epsilon}^3\geq-1+a$ for a.e. $x\in\Omega$.
\item [(ii)] $\displaystyle
{\bf E}_\epsilon(d_\epsilon):=\int_{\Omega}\big(\frac12|\nabla d_{\epsilon}|^2+\frac{3}{4\epsilon^2}(1-|d_{\epsilon}|^2)^2\big)
\leq L_1$.
\item [(iii)] $\displaystyle\big\|\tau_{\epsilon}\big\|_{L^2(\Omega)}\leq L_2.$
\end{itemize}
We have
\begin{thm}\label{th:2.1}{\rm{(}}\cite{LW1}{\rm{)}} For any $a\in(0,2],$ $L_1>0$, and $L_2>0,$ the set ${\bf X}(L_1,L_2, a;\Omega)$ is precompact in $H^1_{\rm{loc}}(\Omega, \mathbb R^3).$ In particular, if for $\epsilon\rightarrow0,$ $\{d_\epsilon\}\subset H^1(\Omega,\mathbb R^3)$ is a sequence of maps in
${\bf X}(L_1,L_2,a;\Omega),$ then there exists a map $d\in H^1(\Omega,\mathbb S^2_{-1+a})\cap
{\bf Y}(L_1, L_2, a;\Omega)$ such that after passing to possible subsequences, $d_\epsilon\rightarrow d$ in $H^1_{\rm{loc}}(\Omega, \mathbb R^3)$ and
$$e_\epsilon(d_\epsilon)\,dx:=\big(\frac12|\nabla d_\epsilon|^2+\frac{(1-|d_\epsilon|^2)^2}{4\epsilon^2}\big)\,dx
\rightharpoonup \frac12|\nabla d|^2\,dx$$
as convergence of Radon measures.
\end{thm}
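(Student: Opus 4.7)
My plan is to establish strong $H^1_{\rm{loc}}$ convergence by combining an $\epsilon$-regularity/concentration-set analysis with the geometric obstruction provided by the hemispheric constraint $d_\epsilon^3\ge -1+a$. Given $\{d_\epsilon\}\subset {\bf X}(L_1,L_2,a;\Omega)$, a diagonal extraction first yields $d_\epsilon\rightharpoonup d$ weakly in $H^1(\Omega,\mathbb R^3)$ and strongly in $L^p_{\rm{loc}}$ for $p<6$. The potential bound $\int_\Omega(1-|d_\epsilon|^2)^2\le \frac{4}{3}\epsilon^2 L_1$ forces $|d|=1$ a.e., and the pointwise constraint $d_\epsilon^3\ge -1+a$ passes to the a.e.\ limit, so $d\in H^1(\Omega,\mathbb S^2_{-1+a})$. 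Along a further subsequence, $e_\epsilon(d_\epsilon)\,dx\rightharpoonup \mu=\frac12|\nabla d|^2\,dx+\nu$ as Radon measures, where $\nu\ge 0$ is the defect measure, and the entire theorem reduces to showing $\nu\equiv 0$.

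To localize where concentration can occur I would prove an $\epsilon$-regularity statement for (\ref{2.2}): there exist $\eta_0,C_0>0$ such that if $B_{2r}(x_0)\Subset\Omega$ and $r^{-1}\int_{B_{2r}(x_0)}e_\epsilon(d_\epsilon)+r\|\tau_\epsilon\|_{L^2(B_{2r}(x_0))}^2\le\eta_0$, then $\sup_{B_r(x_0)}e_\epsilon(d_\epsilon)\le C_0 r^{-2}$. The key input is a Price-type monotonicity identity obtained by testing (\ref{2.2}) against $(x-x_0)\cdot\nabla d_\epsilon$, which renders $r^{-1}\int_{B_r(x_0)}e_\epsilon(d_\epsilon)$ almost monotone up to a correction controlled by $\|\tau_\epsilon\|_{L^2}^2$; an iteration on the Bochner-type differential inequality $-\Delta e_\epsilon(d_\epsilon)\le Ce_\epsilon(d_\epsilon)^2+C|\tau_\epsilon|^2$ then upgrades Morrey smallness to pointwise bounds. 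Setting $\Sigma=\{x\in\Omega:\,\liminf_{r\to 0}\liminf_{\epsilon\to 0}r^{-1}\int_{B_r(x)}e_\epsilon(d_\epsilon)\ge \eta_0/2\}$, a Vitali-type covering argument gives $\mathcal H^1(\Sigma\cap K)<\infty$ for $K\Subset\Omega$, while $\epsilon$-regularity plus standard elliptic bootstrap yields $d_\epsilon\to d$ in $C^1_{\rm{loc}}(\Omega\setminus\Sigma)$ and hence strong $H^1_{\rm{loc}}$ convergence off $\Sigma$.

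It remains to rule out $\Sigma$. If $x_0\in\Sigma$, pick scales $r_k\to 0$ and a diagonal sequence $\epsilon_k\to 0$, and rescale $\widetilde d_k(y):=d_{\epsilon_k}(x_0+r_k y)$ with $\widetilde\epsilon_k=\epsilon_k/r_k$; the rescaled equation has tension $\widetilde\tau_k$ with $\|\widetilde\tau_k\|_{L^2(B_R)}^2\le r_k L_2^2\to 0$, and by monotonicity the rescaled energy is uniformly bounded on each $B_R$. Passing to the limit, one of two profiles emerges. If $\widetilde\epsilon_k\to \widetilde\epsilon_\infty\in[0,+\infty]$, one obtains a finite-energy harmonic (or Ginzburg-Landau) map $\omega:\mathbb R^3\to\mathbb S^2_{-1+a}$, which the three-dimensional Pohozaev identity forces to satisfy $\int_{\mathbb R^3}|\nabla\omega|^2=0$, i.e.\ $\omega$ is constant. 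If instead the concentration persists at a two-dimensional slice, a secondary rescaling produces a nontrivial smooth harmonic sphere $\omega:S^2\to \mathbb S^2_{-1+a}$, which by the Eells-Wood classification is a nonconstant rational map and therefore surjective onto $\mathbb S^2$, contradicting $\omega^3\ge -1+a>-1$. Either way $\nu\equiv 0$, and weak $H^1$ convergence together with $e_\epsilon(d_\epsilon)\,dx\rightharpoonup \frac12|\nabla d|^2\,dx$ upgrades to strong $H^1_{\rm{loc}}$ convergence.

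The principal obstacle is the bubble classification: the $L^2$ control on $\tau_\epsilon$ does not \emph{a priori} rule out mixed-scale neck profiles where energy could be shed through a non-planar configuration, so the rescaling must be arranged so that the only limiting profiles are the 3D soliton (killed by Pohozaev) or the 2D harmonic-sphere bubble (killed by the hemisphere constraint). The hemisphere assumption is the indispensable ingredient in the second case; without it, harmonic 2-spheres into $\mathbb S^2$ are abundant (all the rational maps) and $\nu$ could genuinely be nonzero.
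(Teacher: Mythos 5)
Your proposal follows essentially the same roadmap the paper attributes to its cited source \cite{LW1}: almost energy monotonicity via a Pohozaev/Price multiplier, $\epsilon$-regularity with a concentration set $\Sigma$ of finite $\mathcal H^1$ measure, blow-up producing either a trivial 3D profile or a 2D harmonic $\mathbb S^2$ bubble, and the hemisphere constraint $d_\epsilon^3\ge -1+a$ used to exclude the nonconstant bubble since any nonconstant rational map $S^2\to\mathbb S^2$ is onto. The paper itself only states the four-point outline of this argument and defers the details to \cite{LW1}, so your sketch matches it; the one item you leave implicit is verifying that the limit $d$ actually lands in ${\bf Y}(L_1,L_2,a;\Omega)$ (weak $L^2$ convergence of $\tau_\epsilon\to\tau$ and passage to the limit in the almost-monotonicity inequality), though this is routine once strong $H^1_{\rm{loc}}$ convergence is in hand.
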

The idea of proof of Theorem \ref{th:2.1} is based on: (1) almost energy monotonicity inequality of $d_\epsilon\in {\bf X}(L_1,L_2, a;\Omega)$; (2) an $\delta_0$-regularity and compactness property of $d_\epsilon\in {\bf X}(L_1,L_2, a;\Omega)$;
(3) the blowing-up analysis of $d_\epsilon\in {\bf X}(L_1,L_2, a;\Omega)$ as $\epsilon\rightarrow 0$
in terms of both the concentration set $\Sigma$ and the defect measure $\nu$, motivated by that of
harmonic maps by Lin \cite{Lin} and approximated harmonic maps \cite{LW2, LW3, LW4}; and (4) the ruling out
of possible harmonic $\mathbb S^2$'s generated at $\Sigma$.

In order to study the large time behavior of global finite  energy weak solutions to the compressible nematic liquid crystal flow
(\ref{1.1}) and (\ref{1.2}), we also need the following compactness result on approximated harmonic maps to
$\mathbb S^2_{-1+a}$ for $0<a\le 2$.

For $0<a\le 2$, $L_1>0$, and $L_2>0$, let ${\bf Y}(L_1,L_2, a;\Omega)$ be the set consisting of maps
$d\in H^1(\Omega,\mathbb S^2)$ that are approximated harmonic maps, i.e.,
\begin{equation}\label{approxhm}
\Delta d+|\nabla d|^2 d=\tau \ {\rm{in}}\ \Omega, \ {\rm{with}}\ \tau\in L^2(\Omega, \mathbb R^3),
\end{equation}
that satisfy the following properties:
\begin{itemize}
\item[(i)] $d^3(x)\ge -1+a$ for a.e. $x\in\Omega$.
\item[(ii)] $\displaystyle {\bf F}(d):=\frac12\int_\Omega |\nabla d|^2\le L_1.$
\item[(iii)] $\displaystyle \big\|\tau\big\|_{L^2(\Omega)}\le L_2$.
\item[(iv)] (almost energy monotonicity inequality) for any $x_0\in\Omega$ and $0<r\le R<{\rm{d}}(x_0,\partial\Omega)$,
\begin{equation}\label{almostmono}
\Psi_R(d,x_0)\ge \Psi_r(d,x_0)+\frac12\int_{B_R(x_0)\setminus B_r(x_0)}|x-x_0|^{-1}\big|\frac{\partial d}{\partial |x-x_0|}\big|^2,
\end{equation}
where
$$\Psi_r(d,x_0):=\frac1{r}\int_{B_r(x_0)}\big(\frac12|\nabla d|^2-\langle (x-x_0)\cdot\nabla d, \tau\rangle\big)
+\frac12\int_{B_r(x_0)}|x-x_0||\tau|^2.
$$
\end{itemize}

\begin{thm}\label{th:2.2}{\rm{(}}\cite{LW1}{\rm{)}}
For any $a\in(0,2],$ $L_1>0$, and $L_2>0,$ the set ${\bf Y}(L_1,L_2, a;\Omega)$ is precompact in $H^1_{\rm{loc}}(\Omega, \mathbb S^2).$ In particular, if $\{d_i\}\subset H^1(\Omega,\mathbb R^3)$ is a sequence of approximated harmonic maps in ${\bf Y}(L_1,L_2,a;\Omega)$ with tension fields $\{\tau_i\}$, then there exist
$\tau_0\in L^2(\Omega,\mathbb R^3)$ and an approximated harmonic map
$d_0\in {\bf Y}(L_1,L_2, a; \Omega)$ with tension field $\tau_0$ such that after passing to possible subsequences, $d_i\rightarrow d_0$ in $H^1_{\rm{loc}}(\Omega, \mathbb S^2)$ and $\tau_i\rightharpoonup \tau_0$
in $L^2(\Omega,\mathbb R^3)$. In fact,  $\{d_i\}$ is bounded in $H^2_{\rm{loc}}(\Omega,\mathbb S^2)$.
In particular, $d_0\in H^{2}_{\rm{loc}}(\Omega,\mathbb S^2)$.
\end{thm}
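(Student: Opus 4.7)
The plan is to run, for genuine approximated harmonic maps into $\mathbb S^2$, the same program that underlies Theorem \ref{th:2.1}, exploiting three ingredients: the almost monotonicity (\ref{almostmono}) that is built into the definition of $\mathbf Y(L_1,L_2,a;\Omega)$, an $\varepsilon_0$-regularity theorem for approximated harmonic maps with $L^2$-tension field, and the hemisphere constraint $d^3\ge -1+a$ that rules out non-trivial harmonic spheres.

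First I would extract weak limits. A subsequence of $\{d_i\}$ satisfies $d_i\rightharpoonup d_0$ in $H^1(\Omega,\mathbb R^3)$, $d_i\to d_0$ a.e.\ and in every $L^p_{\rm loc}$ with $p<6$, and $\tau_i\rightharpoonup\tau_0$ in $L^2(\Omega,\mathbb R^3)$. A.e.\ convergence gives $|d_0|=1$ and $d_0^3\ge -1+a$, while weak lower semicontinuity yields $\mathbf F(d_0)\le L_1$ and $\|\tau_0\|_{L^2}\le L_2$. Passing (\ref{almostmono}) to the limit and identifying $|\nabla d_i|^2 d_i \to |\nabla d_0|^2 d_0$ in $\mathcal D'$, so that $(d_0,\tau_0)$ satisfies (\ref{approxhm}), both require strong $H^1_{\rm loc}$-convergence of $\{d_i\}$. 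The whole problem therefore reduces to upgrading weak to strong convergence.

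The key tool is an $\varepsilon_0$-regularity lemma: there exist $\varepsilon_0,C>0$ such that whenever $\overline{B_r(x_0)}\subset\Omega$ and $\Psi_r(d_i,x_0)\le\varepsilon_0$, one has a uniform H\"older estimate $\|d_i\|_{C^{0,\alpha}(B_{r/2}(x_0))}\le C$ and a uniform $H^2$-bound $\|d_i\|_{H^2(B_{r/4}(x_0))}\le C$, with $C$ depending only on $r,L_1,L_2,a$. This rests on a Hodge-type decomposition of $|\nabla d_i|^2 d_i$ (using $\langle\nabla d_i,d_i\rangle=0$ together with $\langle\tau_i,d_i\rangle=-|\nabla d_i|^2$), the Riesz-potential estimate, and the almost monotonicity to propagate smallness of $\Psi$ to smaller balls, exactly as in \cite{LW1}. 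Define the concentration set
\[
\Sigma:=\bigcap_{r>0}\Big\{x\in\Omega:\ \liminf_{i\to\infty}\Psi_r(d_i,x)\ge\varepsilon_0\Big\}.
\]
The monotonicity and the energy bound imply $\Sigma$ is locally $\mathcal H^1$-finite, and off $\Sigma$ the $\varepsilon_0$-regularity yields $d_i\to d_0$ in $H^1_{\rm loc}\cap H^2_{\rm loc}(\Omega\setminus\Sigma)$.

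The main obstacle is to show $\Sigma=\emptyset$. Given a hypothetical $x_0\in\Sigma$, choose $x_i\to x_0$ and $r_i\to 0$ realizing the concentration and set $\tilde d_i(y):=d_i(x_i+r_i y)$; the rescaled tension satisfies $\|\tilde\tau_i\|_{L^2(B_R)}^2=r_i\|\tau_i\|_{L^2(B_{Rr_i}(x_i))}^2\to 0$, so a diagonal subsequence converges, strongly on each ball by $\varepsilon_0$-regularity away from at most a further concentration set, to a non-constant finite-energy harmonic map $\omega:\mathbb R^3\to\mathbb S^2$ with $\omega^3\ge -1+a$. A further blow-up at a regular point of the rescaled defect set --- in dimension three this reduces, via approximate translation-invariance along the tangent direction of $\Sigma$, to a two-dimensional concentration problem, and Sacks--Uhlenbeck bubbling then extracts a non-constant harmonic sphere $\phi:\mathbb S^2\to\mathbb S^2$ with image in $\mathbb S^2_{-1+a}$. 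But every non-constant harmonic map $\mathbb S^2\to\mathbb S^2$ is a rational (or conjugate rational) map of positive degree, hence surjective, contradicting $a>0$. Therefore $\Sigma=\emptyset$, and combining the strong $H^1_{\rm loc}$-convergence with the uniform $H^2_{\rm loc}$-bound already obtained off $\Sigma$ finishes the proof; in particular $d_0\in H^2_{\rm loc}(\Omega,\mathbb S^2)$.
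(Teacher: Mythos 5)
The paper does not prove Theorem \ref{th:2.2}; it cites it from \cite{LW1} and only sketches (for the companion Theorem \ref{th:2.1}) the four ingredients of the argument: almost energy monotonicity, $\delta_0$-regularity, blow-up analysis via the concentration set and defect measure, and ruling out harmonic $\mathbb S^2$'s generated at the concentration set. Your proposal tracks exactly these four steps --- using the almost monotonicity built into the definition of ${\bf Y}$, an $\varepsilon_0$-regularity lemma, dimension reduction along the $\mathcal H^1$-rectifiable concentration set, and surjectivity of non-constant harmonic spheres to contradict $d^3\ge -1+a$ --- so it is in essence the same program as the one the paper attributes to \cite{LW1}.

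One cautionary note on the blow-up step: the intermediate statement that the first rescaling limit is a ``non-constant finite-energy harmonic map $\omega:\mathbb R^3\to\mathbb S^2$'' is imprecise as written (a harmonic map on $\mathbb R^3$ with globally finite Dirichlet energy is constant). What the monotonicity actually provides is a uniform bound on the \emph{scaled} energy $r^{-1}\int_{B_r}|\nabla\omega|^2$, and the non-triviality lives in the tangent measure/concentration set of the limit; the subsequent tangential blow-up along the line tangent to $\Sigma$ then produces a genuinely two-dimensional finite-energy harmonic map of $\mathbb R^2$ into $\mathbb S^2_{-1+a}$, which after removing the singularity at infinity yields the forbidden harmonic sphere. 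With that correction the argument is sound and matches the \cite{LW1} strategy; the $H^2_{\rm loc}$ conclusion follows, as you say, from the $\varepsilon_0$-regularity once $\Sigma=\emptyset$.
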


\setcounter{section}{3}
\setcounter{equation}{0}
\section{Ginzburg-Landau approximation of compressible
nematic liquid crystal flow}

In this section, we will consider the Ginzburg-Landau approximation of compressible nematic liquid crystal flow
and state the existence of global weak solutions, which is an improved version of an earlier result obtained
by Wang-Yu \cite{WY1} (see also \cite{LQ}).

For $\epsilon>0$ and $0<T\le +\infty$, the Ginzburg-Landau approximation equation
of (\ref{1.1}) and (\ref{1.2}) seeks $(\rho_\epsilon, u_\epsilon, d_\epsilon):
Q_T\to\mathbb R_+\times \mathbb R^3\times \mathbb R^3$ that satisfies:
\begin{equation}
\label{3.3}
\begin{cases}
\partial_t\rho_\epsilon+\nabla\cdot(\rho u_\epsilon)=0,\\
\partial_t(\rho_\epsilon u_\epsilon)+\nabla\cdot(\rho_\epsilon u_\epsilon\otimes u_\epsilon)+a\nabla\rho^{\gamma}_\epsilon\\
=\mathcal{L}u_\epsilon-\nabla\cdot\big(\nabla d_\epsilon\odot\nabla d_\epsilon-(\frac12|\nabla d_\epsilon|^2+\frac{1}{4\epsilon^2}(1-|d_\epsilon|^2)^2)\mathbb{I}_3\big),\\
\partial_t d_\epsilon+u_\epsilon\cdot\nabla d_\epsilon=\triangle d_\epsilon+\frac{1}{\epsilon^2}(1-|d_\epsilon|^2)d_\epsilon,
\end{cases}
\end{equation}
along with the initial and boundary condition (\ref{1.2}). We would like to point out
that the notion of finite energy weak solutions of (\ref{3.3}) and (\ref{1.2}) can be
defined in the same way as that of (\ref{1.1}) and (\ref{1.2}) given in \S 1.

\begin{thm}\label{th:3.2} Assume $\gamma>\frac32$ and  the condition (\ref{1.3}), and  $(\rho_0,m_0,d_0)$
satisfies (\ref{1.4}), (\ref{1.5}), (\ref{1.6}). Then there exists a global
finite energy weak solution $(\rho_\epsilon,u_{\epsilon},d_{\epsilon}):\Omega\times [0,+\infty)\to\mathbb R_+\times \mathbb R^3\times
\mathbb R^3$ to the system (\ref{3.3}), under the initial and boundary condition (\ref{1.2}),
such that
\begin{itemize}
\item[(i)] $d_{\epsilon}=(d^1_{\epsilon},d^2_{\epsilon},d^3_{\epsilon})\in L^{\infty}((0,\infty), H^1(\Omega, \mathbb R^3))$, with $|d_\epsilon|\leq 1$ and $d^3_{\epsilon}\geq0$ for a.e. $(x,t)\in \Omega\times (0,\infty)$.
\item[(ii)] $(\rho_\epsilon, u_\epsilon, d_\epsilon)$ satisfies the global energy inequality
\begin{eqnarray}
\frac{d}{dt}{\bf F}_{\epsilon}(t)+\int_{\Omega}\big(\mu|\nabla u_{\epsilon}|^2
+\widetilde{\mu}|\nabla\cdot u_{\epsilon}|^2+|\triangle d_{\epsilon}
+\frac{1}{\epsilon^2}(1-|d_\epsilon|^2)d_{\epsilon}|^2\big)(t)
\label{3.6}\leq 0
\end{eqnarray}
in $\mathcal D'((0,+\infty))$,
where
$${\bf F}_{\epsilon}(t):=\int_{\Omega}\Big(\frac12\rho_{\epsilon}|u_\epsilon|^2
+\frac{a\rho^{\gamma}_{\epsilon}}{\gamma-1}
+\big(\frac12|\nabla d_{\epsilon}|^2+\frac{1}{4\epsilon^2}(1-|d_{\epsilon}|^2)^2\big)\Big)(t).$$
\end{itemize}
\end{thm}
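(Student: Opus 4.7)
\medskip

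\noindent\textbf{Proof proposal.} The plan is to follow the three-level approximation scheme of Feireisl--Novotny--Petzeltova \cite{FNP1} and Lions \cite{L1} adapted to the nematic coupling as in Wang--Yu \cite{WY1}, and to superimpose on it the maximum-principle bounds of Lemmas \ref{le:2.2} and \ref{le:2.3}. Fix $\epsilon>0$. Introduce two further parameters $\eta>0$ (vanishing artificial viscosity in the continuity equation) and $\delta>0$ (vanishing artificial pressure $\delta\rho^\beta$ with $\beta$ large), and a Faedo--Galerkin projection $P_N$ for the velocity equation. Regularize the initial data so that $\rho_0^{\delta,\eta}\ge\delta$ while $\rho_0^{\delta,\eta}\to\rho_0$ in $L^\gamma(\Omega)$, $m_0^{\delta,\eta}\to m_0$ in $L^{2\gamma/(\gamma+1)}(\Omega)$, and keep $d_0$ fixed (since $d_0\in H^1(\Omega,\mathbb{S}^2_+)$ already satisfies $|d_0|\le 1$ and $d_0^3\ge 0$ a.e.).

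At the innermost level, for $u=u_N$ in a finite-dimensional subspace, solve: (a) the regularized continuity equation $\partial_t\rho+\nabla\cdot(\rho u)=\eta\Delta\rho$ with Neumann data for $\rho$; (b) the Ginzburg--Landau equation $(\ref{3.3})_3$ with boundary/initial condition (\ref{1.2}); (c) then use a Schauder fixed-point argument on the Galerkin approximation of the momentum equation. Since $u_N\in L^2_tL^\infty_x$ (by finite dimensionality), Lemmas \ref{le:2.2} and \ref{le:2.3} immediately give $|d|\le 1$ and $d^3\ge 0$ a.e. in $Q_T$ at this level. The energy identity (obtained as in Lemma \ref{le:2.1}, with the Ericksen stress $\nabla d\odot\nabla d-(\frac12|\nabla d|^2+\frac{1}{4\epsilon^2}(1-|d|^2)^2)\mathbb I_3$ whose divergence equals $\langle\Delta d+\frac{1}{\epsilon^2}(1-|d|^2)d,\nabla d\rangle$, so that the cross term with $u$ cancels against the one in the Ginzburg--Landau equation) yields uniform-in-$N$ energy bounds plus the crucial bounds $\rho\in L^\infty_tL^{\gamma+\beta}_x$ and $\eta\int|\nabla\rho|^2$ from the artificial terms.

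Next, pass to the limits $N\to\infty$, $\eta\to 0$, and $\delta\to 0$ in this order. For $d$ the Ginzburg--Landau nonlinearity $\frac{1}{\epsilon^2}(1-|d|^2)d$ is uniformly bounded (since $|d|\le 1$), so parabolic regularity and Aubin--Lions give compactness of $d$ in $L^2_tH^1_{\mathrm{loc}}$; in particular $\nabla d\odot\nabla d$ passes to the limit strongly, and the pointwise bounds $|d|\le 1$, $d^3\ge 0$ survive weak-$*$ $L^\infty$ limits. For $(\rho,u)$ the standard Lions--Feireisl machinery applies: the effective viscous flux identity $\overline{p(\rho)(2\mu+\lambda)\nabla\cdot u} = \overline{p(\rho)}\,(2\mu+\lambda)\nabla\cdot u + $ (commutator) remains valid because the extra stress from $d$ lies in $L^\infty_tL^1_x\cap L^2_tL^{3/2}_x$ (using $\nabla d\in L^\infty_tL^2_x\cap L^2_tL^6_x$), hence is a compact perturbation in the arguments of \cite{L1, FNP1}. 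The renormalized property (\ref{1.7})--(\ref{1.8}) then follows from DiPerna--Lions, and the energy inequality (\ref{3.6}) follows by lower semicontinuity. The maximum-principle conclusions $|d_\epsilon|\le 1$ and $d_\epsilon^3\ge 0$ pass to the limit as a.e. pointwise inequalities.

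\medskip

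\noindent\textbf{Main obstacle.} The delicate point, as in the purely compressible Navier--Stokes setting, is the strong $L^1_tL^1_x$ compactness of the density needed to pass the pressure $a\rho_\epsilon^\gamma$ (and artificial pressure $\delta\rho^\beta$) through the limits. This requires Feireisl's oscillation defect measure argument under $\gamma>\frac{3}{2}$, modified to absorb the nematic stress contribution to the momentum equation. The observation that $\nabla d\odot\nabla d-\frac12|\nabla d|^2\mathbb I_3$ is a compact perturbation (since $d$ is already strongly compact) is what makes the scheme go through essentially as in \cite{WY1}, and the only genuinely new input relative to \cite{WY1} is the systematic use of Lemmas \ref{le:2.2}--\ref{le:2.3} to upgrade from variable-length directors to the unit-sphere-bounded, hemisphere-valued setting.
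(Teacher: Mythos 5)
Your proposal follows essentially the same route as the paper: the paper cites Wang--Yu \cite{WY1} for the three-level (Faedo--Galerkin, artificial viscosity, artificial pressure) existence machinery and then observes that Lemmas \ref{le:2.2} and \ref{le:2.3} apply at the innermost Galerkin level, where $u$ is smooth hence in $L^2_tL^\infty_x$, giving $|d|\le 1$ and $d^3\ge 0$ which persist through the subsequent limits. The only small point you glossed over is that the paper \emph{does} regularize $d_0$ to a $C^2(\overline\Omega)$ datum $d_{0,\delta}$ for the classical solvability at the innermost level, but chooses the regularization so that $|d_{0,\delta}|\le 1$ and $d_{0,\delta}^3\ge 0$ are preserved --- precisely the hypotheses of Lemmas \ref{le:2.2}--\ref{le:2.3}.
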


\begin{proof} The existence of finite energy weak solutions has been established by Wang-Yu \cite{WY1}, which
uses a three level approximation scheme similar to that of compressible Navier-Stokes equation by
\cite{F1} and \cite{FNP1}. It consists of Faedo-Galerkin approximation, artificial viscosity, and artificial
pressure. The reader can consult the proof of \cite{WY1} Theorem 2.1 for the detail.

Here we only indicate the proof of (i).
Let $\epsilon>0$ be fixed. Recall that  the first level of Faedo-Galerkin's approximation involves
to solve the initial and boundary value problems of (\ref{3.3}) as follows. For any $\alpha>0$, $\delta>0$,  and $0<T<+\infty$,
we first approximate the initial data $(\rho_0, m_0,d_0)$ by $\big(\rho_{0,\delta}, m_{0,\delta}, d_{0,\delta}\big)
\in C^2(\overline\Omega,\mathbb R_+\times \mathbb R^3\times \mathbb R^3_+)$ such that
the following conditions hold:
\begin{equation}\label{approx_initial}
\begin{cases}\delta\le\rho_{0,\delta}\le \delta^{-1} \ {\rm{in}}\ \Omega, \frac{\partial\rho_{0,\delta}}{\partial\nu}\big|_{\partial\Omega}=0,
\ {\rm{and}}\  \rho_{0,\delta}\rightarrow\rho_0 \ {\rm{in}}\ L^\gamma(\Omega),\\
\ m_{0,\delta}\rightarrow m_0
\ {\rm{in}}\ L^{\frac{2\gamma}{\gamma+1}}(\Omega), \frac{|m_{0,\delta}|^2}{\rho_{0,\delta}}\rightarrow
\frac{|m_0|^2}{\rho_0}\chi_{\{\rho_0>0\}}\ {\rm{in}}\ L^1(\Omega),\\
|d_{0,\delta}(x)|\le 1, \ d_{0,\delta}^3(x)\ge 0 \ {\rm{a.e.}}\ x\in\Omega,
\ d_{0,\delta}\rightarrow d_0 \ {\rm{in}}\ H^1(\Omega,\mathbb R^3),
\end{cases}
\end{equation}
as $\delta\rightarrow 0$.

For $u\in C^1([0,T], C^2_0(\overline\Omega,\mathbb R^3))$, with
$\displaystyle u\big|_{t=0}=u_{0,\delta}\equiv\frac{m_{0,\delta}}{\rho_{0,\delta}}$,   let
$d_{\delta}=d_{\delta}([u])\in C^1([0,T], C^2(\overline\Omega,\mathbb R^3))$
be the unique solution of (see \cite{WY1} Lemma 3.1 and Lemma 3.2):
\begin{equation}\label{d-eqn}
\begin{cases}
\partial_t d+u\cdot\nabla d=\Delta d+\frac{1}{\epsilon^2}(1-|d|^2)d & \ {\rm{in}}\ Q_T,\\
\qquad\qquad \ \ d=d_{0,\delta}   & \ {\rm{on}}\ \partial_p Q_T.
\end{cases}
\end{equation}
Since $|d_{0,\delta}(x)|\le 1$ and $d^3_{0,\delta}(x)\ge 0$
for $x\in\Omega$, it
follows from Lemma \ref{le:2.2} and Lemma \ref{le:2.3} that $d_\delta$ satisfies
\begin{equation}\label{image}
|d_{\delta}(x,t)|\le 1 \ {\rm{and}}\ d_{\delta}^3(x,t)\ge 0,
\ \forall\ (x,t)\in Q_T.
\end{equation}
Now let $\rho_{\alpha, \delta}=\rho_{\alpha,\delta}([u])\in C^1([0,T], C^2(\overline\Omega))$
be the unique solution of the problem:
\begin{equation}\label{rho-eqn}
\begin{cases}
\partial_t\rho+\nabla\cdot(\rho u)=\alpha\Delta\rho & \ {\rm{in}}\  Q_T,\\
\rho(x,0)=\rho_{0,\delta}(x)  & \ {\rm{in}}\ \Omega,\\
\qquad\frac{\partial\rho}{\partial\nu}= 0 & \ {\rm{on}}\ \partial\Omega\times (0,T).
\end{cases}
\end{equation}
While for $u$, it involves to employ first the Galerkin method and then the fixed point theorem
to solve $u=u_{\alpha,\delta}([u])$ to the problem: for some $\beta>\max\{4,\gamma\}$,
\begin{equation}\label{u-eqn}
\begin{cases}
\partial_t(\rho_{\alpha,\delta}u)+\nabla\cdot(\rho_{\alpha,\delta} u\otimes u)
+a\nabla \big(\rho_{\alpha,\delta}^\gamma\big)
+\delta\nabla \big(\rho_{\alpha,\delta}^\beta\big)
+\alpha\nabla u\cdot\nabla\rho_{\alpha,\delta}\\
=\mathcal Lu -\nabla\cdot\Big[\nabla d_{\delta}\odot \nabla d_{\delta}
-\big(\frac12|\nabla d_{\delta}|^2+\frac{1}{4\epsilon^2}(1-|d_{\delta}|^2)^2\big)
\mathbb I_3\Big], \ {\rm{in}}\ Q_T,\\
u=u_{0,\delta}\ \ {\rm{on}}\ \partial_p Q_T.
\end{cases}
\end{equation}
Since the global weak solution $(\rho_\epsilon, u_\epsilon, d_\epsilon)$ to the system (\ref{3.3}), under the initial and boundary condition
 (\ref{1.2}), constructed in \cite{WY1}, was obtained as a strong limit of $(\rho_{\alpha,\delta}, u_{\alpha,\delta}, d_{\delta})$ in
$L^\gamma(Q_T)\times L^2(Q_T)\times L^2([0,T], H^1(\Omega,\mathbb R^3))$
for any $0<T<+\infty$, as viscosity coefficients $\alpha\rightarrow 0$ first and then artificial pressure
coefficients $\delta\rightarrow 0$. It is readily seen
that $d_\epsilon$ satisfies the property (ii).
\end{proof}

\setcounter{section}{4}
\setcounter{equation}{0}
\section{Existence of global weak solutions}

In this section, we will prove Theorem \ref{th:1.1} by studying in depth the convergence of sequences of solutions $(\rho_\epsilon,u_\epsilon,d_{\epsilon})$, constructed by Theorem \ref{th:3.2}, as $\epsilon\rightarrow 0^+.$

\bigskip
\noindent{\bf Proof of Theorem \ref{th:1.1}}.

\medskip
To prove the  existence of global finite energy weak solutions to (\ref{1.1}), let $(\rho_\epsilon,u_{\epsilon},d_{\epsilon}):\Omega\times [0,+\infty)
\to\mathbb R_+\times \mathbb R^3\times \mathbb R^3$, $0<\epsilon\le 1$, be a family of finite energy weak solutions to
 the system (\ref{3.3}), under the initial and boundary condition (\ref{1.2}), constructed by Theorem \ref{th:3.2}.
Since $|d_0|=1$ and $d_0^3\geq0$ a.e. in $\Omega$, $(\rho_\epsilon,u_{\epsilon},d_{\epsilon})$
satisfies all these properties in Theorem \ref{th:3.2}. In particular, it follows from (\ref{3.6}) that
\beq\nonumber&&\sup\limits_{\epsilon>0}\Big[\sup\limits_{0<t<\infty}\int_{\Omega}\Big(
\frac12\rho_\epsilon|u_\epsilon|^2+\frac{a}{\gamma-1}\rho^{\gamma}_{\epsilon}
+(\frac12|\nabla d_{\epsilon}|^2+\frac{1}{4\epsilon^2}(1-|d_{\epsilon}|^2)^2)\Big)(t)\\
&&\nonumber+\int_{0}^{\infty}\int_{\Omega}\Big(\mu|\nabla u_{\epsilon}|^2+\widetilde{\mu}|\nabla\cdot u_{\epsilon}|^2+|\triangle d_{\epsilon}+\frac{1}{\epsilon^2}(1-|d_\epsilon|^2)d_{\epsilon}|^2\Big)\Big]\\
&&\label{global_bd}
\leq \int_{\Omega}\Big(\frac{|m_0|^2}{2\rho_{0}}\chi_{\{\rho_{0}>0\}}+\frac{a}{\gamma-1}\rho^{\gamma}_{0}
+\frac12|\nabla d_{0}|^2\Big):={\bf E}(0).
\eeq
By (\ref{global_bd}), we may assume that there exists $(\rho, u, d):\Omega\times [0,+\infty)
\to \mathbb R_+\times \mathbb R^3\times \mathbb S^2$ such that after passing to a subsequence,
\begin{equation}\label{weak_conv}
\begin{cases}
\rho_\epsilon\rightharpoonup \rho \ {\rm{weak^*\ in}}\ L^\infty([0,T], L^\gamma(\Omega)),\\
u_\epsilon\rightharpoonup u \ {\rm{in}}\ L^2([0,T], H^1_0(\Omega)),\\
d_\epsilon\rightharpoonup d \ {\rm{weak^*\ in}}\ L^\infty([0,T], H^1(\Omega)),
\end{cases}
\end{equation}
as $\epsilon\rightarrow 0$, for any $0<T<+\infty$.

We will prove that $(\rho, u, d)$ is a global finite energy weak solution to (\ref{1.1}) and (\ref{1.2}).
The proof will be divided into several subsections.

\subsection{$d_\epsilon\rightarrow d$\ strongly in $L^2([0,T], H^1_{\rm{loc}}(\Omega))$}

This will be achieved by applying Theorem \ref{th:2.1}, similar to that of \cite{LW1}.
First it follows from the equation $(\ref{3.3})_3$ and the inequality (\ref{global_bd}) that
$\partial_t d_\epsilon\in L^2([0,T], L^\frac32(\Omega))+L^2([0,T], L^2(\Omega))$ so that
$\partial_t d_\epsilon\in L^2([0,T], H^{-1}(\Omega))$ and
\begin{equation}
\label{4.3}\sup\limits_{0<\epsilon\le 1}\big\|\partial_td_\epsilon\big\|_{L^2(0,T;H^{-1}(\Omega))}<+\infty.
\end{equation}
By Aubin-Lions' lemma, we conclude that
\begin{equation}
\label{4.4}d_\epsilon\rightarrow d\ \ \mbox{in}\ L^2(Q_T)
\ {\rm{and}}\ \nabla d_{\epsilon}\rightharpoonup \nabla d\ \ \mbox{in}\ L^2([0,T], L^2(\Omega)).
\end{equation}
By Fatou's lemma,  (\ref{global_bd}) implies that
\begin{equation}
\label{4.5}\int_0^T\liminf\limits_{\epsilon\rightarrow0}\int_{\Omega}\big|\triangle d_\epsilon+\frac{1}{\epsilon^2}(1-|d_\epsilon|^2)d_{\epsilon}\big|^2\leq {\bf E}(0).
\end{equation}
For sufficiently large $\Lambda>1,$
define the set of good time slice, $G_\Lambda^T$, by
$$G_\Lambda^T:=\Big\{t\in[0,T]\ \Big|\ \liminf\limits_{\epsilon\rightarrow0}\int_{\Omega}\big|\triangle d_\epsilon+\frac{1}{\epsilon^2}(1-|d_\epsilon|^2)d_{\epsilon}\big|^2(t)\leq\Lambda\Big\},$$
and the set of bad time slices, $B_{\Lambda}^T$, by
$$B_\Lambda^T:=[0,T]\backslash G_\Lambda^T
=\Big\{t\in[0,T]\ \Big|\ \liminf\limits_{\epsilon\rightarrow0}\int_{\Omega}
\big|\triangle d_\epsilon+\frac{1}{\epsilon^2}(1-|d_\epsilon|^2)d_{\epsilon}\big|^2(t)>\Lambda\Big\}.$$
 It is easy to see from (\ref{4.5}) that
\begin{equation}
\label{est_badset}
\Big|B_\Lambda^T\Big|\leq \frac{{\bf E}(0)}{\Lambda}.
\end{equation}
By (\ref{global_bd}) and (\ref{est_badset}),  we obtain
 \beq\label{4.7}
\int_{B_\Lambda^T}\int_{\Omega}\Big[|\nabla d_\epsilon-\nabla d|^2+
 \frac{1}{\epsilon^2}(1-|d_\epsilon|^2)^2(t)\Big]
 \leq C \big|B_{\Lambda}^T\big|\sup\limits_{0<t<T}\mathbf F_{\epsilon}(t)
\leq \frac{C{\bf E}(0)}{\Lambda}.
\eeq

For any $t\in G_{\Lambda}^T$, set
$\displaystyle\tau_\epsilon(t)=\big(\triangle d_\epsilon+\frac{1}{\epsilon^2}(1-|d_\epsilon|^2)d_{\epsilon}\big)(t)$. Then it follows from the definition of $G_\Lambda^T$ that
there exists $\tau(t)\in L^2(\Omega,\mathbb R^3)$ such that, after passing to a subsequence,
$\tau_\epsilon(t)\rightharpoonup \tau(t)$ in $L^2(\Omega)$.
Since $\big\{d_\epsilon(t)\big\}\subset {\bf X}({\bf E}(0),\Lambda, 1;\Omega)$,
Theorem \ref{th:2.1} implies that there exists $d(t)\in {\bf Y}({\bf E}(0),\Lambda, 1;\Omega)$
such that after passing to a subsequence, $d_{\epsilon}(t)\rightarrow d(t)$ strongly in $H^1_{\rm{loc}}({\Omega})$ and $\frac{1}{\epsilon^2}(1-|d_\epsilon(t)|^2)^2\rightarrow 0$
in $L^1_{\rm{loc}}(\Omega)$.

Now we want to show that, after passing to a subsequence,
\begin{equation}\label{good-conv}
\nabla d_\epsilon\rightarrow \nabla d  \ \ {\rm{in}}\ \ L^2_{\rm{loc}}(\Omega\times G_\Lambda^T).
\end{equation}
This can be done similarly to Claim 8.2 of \cite{LW1}. Here we provide it.
Suppose (\ref{good-conv}) were false.
Then there exist a subdomain $\widetilde\Omega\subset\subset\Omega$, $\delta_0>0$, and $\epsilon_i\rightarrow 0$ such that
\begin{equation}\label{gap1}\int_{\widetilde\Omega\times G_\Lambda^T}
|\nabla(d_{\epsilon_i}-d)|^2\ge \delta_0.
\end{equation}
Note that from (\ref{4.4}) we have
\begin{equation}
\label{no-gap}
\lim_{\epsilon_i\rightarrow 0}\int_{\Omega\times G_\Lambda^T}
|d_{\epsilon_i}-d|^2=0.
\end{equation}
By Fubini's theorem, (\ref{gap1}), and (\ref{no-gap}), we have that there exists
$t_i\in G_\Lambda^T$ such that
\begin{equation}\label{no-gap1}
\lim_{\epsilon_i\rightarrow 0}\int_{\Omega}|d_{\epsilon_i}(t_i)
-d(t_i)|^2=0,
\end{equation}
and
\begin{equation}\label{gap2}
\int_{\widetilde\Omega}\big|\nabla(d_{\epsilon_i}(t_i)
-d(t_i))\big|^2\ge\frac{2\delta_0}{T}.
\end{equation}
It is easy to see that $\big\{d_{\epsilon_i}(t_i)\big\}\subset {\bf X}({\bf E}(0), \Lambda, 1; \Omega)$
and $\big\{d(t_i)\big\}\subset {\bf Y}({\bf E}(0), \Lambda, 1; \Omega)$. It follows
from Theorem 3.1 and Theorem 3.2 that there exist $d_1, d_2\in {\bf Y}({\bf E}(0), \Lambda, 1; \Omega)$ such that
$$d_{\epsilon_i}(t_i)\rightarrow d_1
\ {\rm{and}}\ d(t_i)\rightarrow d_2\ \ {\rm{in}}\ \ L^2(\Omega)\cap H^1(\widetilde\Omega).
$$
This and (\ref{gap2}) imply that
\begin{equation}\label{gap3}
\int_{\widetilde\Omega}\big|\nabla(d_1
-d_2)\big|^2\ge\frac{2\delta_0}{T}.
\end{equation}
On the other hand, from (\ref{no-gap1}), we have that
\begin{equation}\label{no-gap2}
\int_{\Omega}|d_1
-d_2|^2=0.
\end{equation}
It is clear that (\ref{gap3}) contradicts (\ref{no-gap2}). Hence (\ref{good-conv}) is proven.
Similar to Lemma 4.1, Claim 4.4 in \cite{LW1}, we also have
\begin{equation}
\label{4.10}
\int_{\widetilde\Omega\times G_\Lambda^T}\frac{1}{\epsilon^2}(1-|d_{\epsilon}|^2)^2
\rightarrow 0\ \mbox{as}\ \epsilon\rightarrow 0.
\end{equation}
Combining (\ref{4.7}), (\ref{good-conv}), with (\ref{4.10}), we obtain
\begin{equation}\label{4.11}
\lim_{\epsilon\rightarrow 0}\Big[\|d_\epsilon-d\|_{L^2([0,T],H^1(\widetilde\Omega))}
+\int_{\widetilde\Omega\times [0,T]}\frac{(1-|d_\epsilon|^2)^2}{\epsilon^2}\Big]\le C\Lambda^{-1}.
\end{equation}
Since $\Lambda>1$ can be chosen arbitrarily large, we conclude that
\beq\label{4.12}
d_\epsilon\rightarrow d\ \mbox{in}\ L^2([0,T],H^{1}_{\rm{loc}}(\Omega))
\ {\rm{and}}\  \frac{(1-|d_\epsilon|^2)^2}{\epsilon^2}\rightarrow 0 \ \mbox{in}\ L^1([0,T],
L^1_{\rm{loc}}(\Omega)).
\eeq

\subsection{$\rho_{\epsilon}u_{\epsilon}\rightarrow\rho u$ in the sense of distributions}

By (\ref{global_bd}),  $\sqrt{\rho_\epsilon}$ is bounded in $L^{\infty}([0,T], L^{2\gamma}(\Omega))$
and $\sqrt{\rho_{\epsilon}}u_{\epsilon}$ is bounded in $L^\infty([0,T], L^2(\Omega))$. Thus
$\rho_{\epsilon}u_{\epsilon}$ is bounded in $L^\infty([0,T], L^{\frac{2\gamma}{\gamma+1}}(\Omega))$ and
$\partial_t\rho_{\epsilon}=-\nabla\cdot(\rho_{\epsilon}u_{\epsilon})$ is bounded
in $L^{\infty}([0,T], W^{-1,\frac{2\gamma}{\gamma+1}}(\Omega)).$
Applying \cite{L1} Lemma C.1, we have
\beq\label{4.13}
\rho_\epsilon\rightarrow\rho\ \mbox{in}\ C([0,T],L^{\gamma}_{\rm{weak}}(\Omega)).
\eeq
Since $L^{\gamma}(\Omega)\subset H^{-1}(\Omega)$ is compact,
 we conclude that
\beq\label{4.14}
\rho_\epsilon\rightarrow\rho\ \mbox{in}\ C([0,T],H^{-1}(\Omega)).
\eeq
Thus we show that
\beq\label{4.15}
\rho_{\epsilon}u_{\epsilon}\rightarrow\rho u\ \mbox{in}\ \mathcal D'(Q_T).
\eeq

\subsection{Higher integrability estimates of $\rho_\epsilon$}

{There exist $\theta>0$ and $C>0$ depending only on $\gamma$ and $T$
such that for any $0<\epsilon\le 1$, it holds
\begin{equation}\label{rho-bd}
\int_0^T\int_{\Omega}\rho_\epsilon^{\gamma+\theta}
\leq C.
\end{equation}}

 By Theorem \ref{th:3.2},  $(\rho_{\epsilon},u_{\epsilon})$ is a renormalized solution of (\ref{3.3})$_1$.
Let  $(\rho_{\epsilon},u_{\epsilon}):\mathbb R^3\times (0,T)\to\mathbb R_+\times\mathbb R^3$
be the extension of $(\rho_\epsilon, u_\epsilon)$ from $\Omega$ such that
$(\rho_\epsilon, u_\epsilon)=(0,0)$ in $\mathbb R^3\setminus\Omega$.
Then $(\rho_\epsilon, u_\epsilon)$ satisfies, in the sense of distributions, that
\beq\label{4.17}\partial_t(b(\rho_\epsilon))+\nabla\cdot(b(\rho_{\epsilon})u_{\epsilon})+\big(b'(\rho_{\epsilon})\rho_{\epsilon}-
 b(\rho_{\epsilon})\big)\nabla\cdot u_{\epsilon}=0\ \mbox{in}\ \mathbb R^3\times (0,T),
\eeq
for any bounded function $b\in C^1((0,+\infty))\cap C([0,+\infty))$ (see, e.g., \cite{FNP1}).

As in \cite{F1}, \cite{FNP1} and \cite{WY1}, we can employ suitable approximations
so that (\ref{4.17}) also holds for $b(\rho_{\epsilon})=\rho_{\epsilon}^{\theta}$ for $0<\theta<1$.
Note that $\rho_\epsilon^\theta\in L^{\frac{\gamma}{\theta}}(Q_T).$
For $m\ge 1$, let $S_m(f)=\eta_{\frac{1}{m}}*f$ denote the standard mollification of $f\in L^1(\mathbb R^3)$.
Then we have
 \beq
\label{4.18}
\partial_t\big(S_m(\rho_\epsilon^\theta)\big)+\nabla\cdot\big(S_m(\rho_{\epsilon}^\theta)u_{\epsilon}\big)
-(1-\theta)S_m\big(\rho_{\epsilon}^\theta\nabla\cdot u_{\epsilon}\big)=
q_m\ \ \mbox{in}\ \mathbb R^3\times (0,T),
\eeq
where
$$q_m=\nabla\cdot(S_m(\rho_{\epsilon}^\theta)u_{\epsilon})-S_m(\nabla\cdot(\rho_\epsilon^\theta u_{\epsilon})).$$
By virtue of \cite{L1} Lemma 2.3, $\rho_{\epsilon}^\theta\in L^{\infty}([0,T], L^{\frac{\gamma}{\theta}}(\Omega))$,
and $u_{\epsilon}\in L^2([0,T], H^{1}_0(\Omega))$,  we have that
\begin{equation}
\label{error-est}
\lim\limits_{m\rightarrow\infty}\big\|q_m\big\|_{L^2([0,T], L^\lambda(\mathbb R^3))}=0,
\ {\rm{with}}\ \frac{1}{\lambda}=\frac{\theta}{\gamma}+\frac12,
\end{equation}
provided $\theta<\frac{\gamma}{2}.$

As in \cite{F1} and \cite{L1},   define the (inverse of divergence) operator
$$\mathcal B: \Big\{f\in L^p(\Omega) \ \big |\ \int_{\Omega}f=0\Big\}\mapsto W^{1,p}_0(\Omega,\mathbb R^3)$$
such that for any $1<p<+\infty$,
\begin{equation}\label{4.19}
\begin{cases}
\nabla\cdot \mathcal B(f)=f\  \mbox{in}\ \Omega, \ \ \mathcal B(f)=0 \ {\rm{on}}\ \partial\Omega,\\
\big\|\mathcal B(f)\big\|_{W^{1,p}_{0}(\Omega)}\leq C(p)\big\|f\big\|_{L^p(\Omega)}.
\end{cases}
\end{equation}
Set $\displaystyle\oint_{\Omega} f=\frac{1}{|\Omega|}\int_{\Omega}f.$ For
$\varphi\in C_0^{\infty}((0,T))$, with $0\leq\varphi\leq1$, let
$$\phi(x,t)=\varphi(t)\mathcal B\Big[S_m(\rho_{\epsilon}^{\theta})-\oint_{\Omega}S_m(\rho_{\epsilon}^{\theta})\Big](x,t).$$
  By (\ref{global_bd}), we see that for sufficiently small $\theta$,  $$\Big[S_m(\rho_{\epsilon}^{\theta})-\oint_{\Omega}S_m(\rho_{\epsilon}^{\theta})\Big]
\in C([0,T], L^p(\Omega)),\ \forall\ p\in(1,+\infty).$$
  By (\ref{4.19}) and the Sobolev embedding theorem, we have
that $\phi\in C(Q_T)$. Thus we can test the equation
$(\ref{3.3})_2$ by $\phi$ and obtain
  \beqno&&a\int_0^T\int_{\Omega}\varphi(t)\rho_{\epsilon}^\gamma S_m(\rho_\epsilon^{\theta})\\
&&=a\int_0^T\varphi(t)\big(\int_{\Omega}\rho_{\epsilon}^\gamma \big)\big(\oint_{\Omega}S_m(\rho_{\epsilon}^{\theta})\big)\\
  &&\ -\int_0^T\int_{\Omega}\varphi'(t)\rho_{\epsilon} u_{\epsilon}\mathcal B\big[S_m(\rho_\epsilon^\theta)-\oint_{\Omega}S_m(\rho_{\epsilon}^{\theta})\big]\\
   &&\ +\int_0^T\int_{\Omega}\varphi(t)(\mu\nabla u_{\epsilon}-\rho_\epsilon u_{\epsilon}\otimes u_{\epsilon})\nabla \mathcal B\big[S_m(\rho_\epsilon^\theta)-\oint_{\Omega}S_m(\rho_{\epsilon}^{\theta})\big]\\
  &&\ +\int_0^T\int_{\Omega}\varphi(t)\widetilde{\mu}\nabla\cdot u_{\epsilon}\nabla\cdot \mathcal B\big[S_m(\rho_\epsilon^\theta)-\oint_{\Omega}S_m(\rho_{\epsilon}^{\theta})\big]\\
  &&\ +(1-\theta)\int_0^T\int_{\Omega}\varphi(t)\rho_{\epsilon} u_{\epsilon}\mathcal B\big[S_m(\rho_\epsilon^\theta\nabla\cdot u_{\epsilon})-\oint_{\Omega}S_m(\rho_\epsilon^\theta\nabla\cdot u_{\epsilon})\big]\\
  &&\ +\int_0^T\int_{\Omega}\varphi(t)\big(\triangle d_{\epsilon}+\frac{1}{\epsilon^2}(1-|d_{\epsilon}|^2)d_{\epsilon}\big)\cdot\nabla d_{\epsilon}\mathcal B\big[S_m(\rho_{\epsilon}^{\theta})-\oint_{\Omega}S_m(\rho_{\epsilon}^\theta)\big]\\
  &&\ -\int_0^T\int_{\Omega}\varphi(t)\rho_{\epsilon} u_{\epsilon}\mathcal B\big[\nabla\cdot (S_m(\rho_\epsilon^\theta) u_{\epsilon})\big]\\
&&\ +\int_0^T\int_{\Omega}\varphi(t)\rho_{\epsilon} u_{\epsilon}\mathcal B\big[q_m-\oint_{\Omega}q_m\big]\\
&&=\sum\limits_{i=1}^{7}L_i^m+\int_0^T\int_{\Omega}\varphi(t)\rho_{\epsilon} u_{\epsilon}\mathcal B\big[q_m-\oint_{\Omega}q_m\big].
\eeqno
Since $\rho_\epsilon u_\epsilon$ is bounded in $L^\infty([0,T], L^{\frac{2\gamma}{\gamma+1}}(\Omega))$ and
$q_m$ satisfies (\ref{error-est}), it follows from (\ref{4.19}), the Sobolev embedding theorem,
and the H\"older inequality that
$$\lim_{m\rightarrow\infty}\int_0^T\int_{\Omega}\varphi(t)\rho_{\epsilon} u_{\epsilon}\mathcal B\big[q_m-\oint_{\Omega}q_m\big]=0.$$
Hence, after taking $m\rightarrow\infty$, we have
\begin{equation}
\label{improve1}\int_{(0,T)\times\Omega}\varphi\rho_{\epsilon}^{\gamma+\theta}\leq
\limsup_{m\rightarrow\infty}\sum\limits_{i=1}^{7}L_i^m.
\end{equation}
Now we estimate $L_1^m,\cdots, L^m_7$ as follows.

\noindent(1) For $L_1^m,$ we have that
\beqno
&&\big|\lim_{m\rightarrow\infty}L_1^m\big|
=\Big|a\int_0^T\int_{\Omega}\varphi\rho_\epsilon^\gamma(\oint_{\Omega}\rho_\epsilon^\theta)\Big|
\leq C\Big(\int_0^T\int_{\Omega}\rho_\epsilon^\gamma\Big)\Big\|\rho_\epsilon\Big\|_{L^\infty([0,T], L^\gamma(\Omega))}^\theta
\eeqno
is uniformly bounded.

\noindent(2) For $L_2^m$, we have that
\beqno
|L_2^m|&=&\Big|\int_0^T\int_{\Omega}\varphi'(t)\rho_{\epsilon}u_{\epsilon}\mathcal
B\big[q_m-\oint_{\Omega}q_m\big]\Big|\\
&\leq& C\int_0^T\big\|\rho_\epsilon u_\epsilon\big\|_{L^{\frac{2\gamma}{\gamma+1}}(\Omega)}
\big\|\mathcal B\big[q_m-\oint_{\Omega}q_m\big]\big\|_{L^{\frac{2\gamma}{\gamma-1}}(\Omega)}dt\\
&\leq& C\int_0^T\big\|\mathcal B\big[q_m-\oint_{\Omega}q_m\big]\big\|_{W^{1,\lambda}(\Omega)}dt\\
&\leq& C\int_0^T\big\|q_m-\oint_{\Omega}q_m\big\|_{L^{\lambda}(\Omega)}dt\\
&\leq& C\Big\|q_m\Big\|_{L^2([0,T],L^{\lambda}(\Omega))}\rightarrow 0
\ \ \mbox{as}\ \ m\rightarrow+\infty,
\eeqno
provided $\theta<\frac{\gamma}{3}-\frac12$.

\noindent (3) For $L_3^m$, we have
\beqno
\big|L_3^m\big|
&\leq& C\int_0^T\Big\{\big\|u_\epsilon\big\|_{H^1(\Omega)}
\big\|\mathcal B\big[\rho_\epsilon^\theta-\oint\rho_\epsilon^\theta\big]\big\|_{H^1(\Omega)}\\
&&+\big\|\rho_\epsilon\big\|_{L^\gamma(\Omega)}\big\|u_\epsilon\big\|_{L^6(\Omega)}^2
\big\|\mathcal B\big[\rho_\epsilon^\theta-\oint\rho_\epsilon^\theta \big]\big\|_{W^{1,\frac{3\gamma}{2\gamma-3}}(\Omega)}\Big\}dt\\
&\leq& C\int_0^T\Big(\big\|u_\epsilon\big\|_{H^1(\Omega)}\big\|\rho_\epsilon\big\|_{L^{2\theta}(\Omega)}^\theta
+\big\|\rho_\epsilon\big\|_{L^\gamma(\Omega)}\big\|u_\epsilon\big\|_{H^1(\Omega)}^2
\big\|\rho_\epsilon\big\|_{L^{\frac{3\gamma\theta}{2\gamma-3}}(\Omega)}^\theta\Big)dt
\eeqno
is uniformly bounded, provided $\theta<\min\big\{\frac{\gamma}{2},\frac{2\gamma}{3}-1\big\}.$

\noindent (4) For $L_4^m$, we have
\beqno
\big|L_4^m\big|&=&\Big|\int_0^T\int_{\Omega}\widetilde{\mu}\varphi\nabla\cdot u_\epsilon\nabla\cdot \mathcal B\big[\rho_\epsilon^\theta-\oint_{\Omega}\rho_\epsilon^\theta\big]\Big|\\
&=&\Big|\int_0^T\int_{\Omega}\widetilde{\mu}\varphi\nabla\cdot u_\epsilon\big(\rho_\epsilon^\theta-\oint_{\Omega}\rho_\epsilon^\theta \big)\Big|\\
&\leq& C\Big\|u_\epsilon\Big\|_{L^2([0,T],H^1(\Omega))}\Big\|\rho_\epsilon\Big\|_{L^\infty([0,T],L^{2\theta}(\Omega))}^\theta
\eeqno
is uniformly bounded, provided $\theta\leq\frac{\gamma}{2}.$

\noindent(5) For $L_5^m$, we have
\beqno
\big|L_5^m\big|&=&(1-\theta)\Big|\int_0^T\int_{\Omega}\varphi\rho_{\epsilon} u_{\epsilon}\mathcal B\big[\rho_\epsilon^\theta\nabla\cdot u_{\epsilon}-\oint_{\Omega}\rho_\epsilon^\theta\nabla\cdot u_{\epsilon}\big]\Big|\\
&\leq& C\int_0^T\big\|\rho_\epsilon\big\|_{L^\gamma(\Omega)}\big\|u_\epsilon\big\|_{L^6(\Omega)}
\big\|\mathcal B\big[\rho_\epsilon^\theta\nabla\cdot u_{\epsilon}-\oint_{\Omega}\rho_\epsilon^\theta\nabla\cdot u_{\epsilon}\big]\big\|_{L^{\frac{6\gamma}{5\gamma-6}}(\Omega)}dt\\
&\le& C\int_0^T\big\|\rho_\epsilon\big\|_{L^\gamma(\Omega)}\big\|u_\epsilon\big\|_{L^6(\Omega)}
\big\|\rho_\epsilon^\theta\nabla\cdot u_{\epsilon}\big\|_{L^{\frac{6\gamma}{7\gamma-6}}(\Omega)}dt\\
&\le& C\int_0^T\big\|\rho_\epsilon\big\|_{L^\gamma(\Omega)}\big\|u_\epsilon\big\|_{H^1(\Omega)}
\big\|\rho_\epsilon\big\|_{L^{\frac{3\gamma\theta}{2\gamma-3}}(\Omega)}^\theta
\big\|\nabla\cdot u_{\epsilon}\big\|_{L^{2}(\Omega)}dt
\eeqno
is uniformly bounded, provided $\theta<\frac{2\gamma}3-1$.

\noindent (6)  For $L_6^m,$ we have
\beqno
\big|L_6^m\big|&=&\Big|\int_0^T\int_{\Omega}\varphi\big(\triangle d_{\epsilon}+\frac{1}{\epsilon^2}(1-|d_{\epsilon}|^2)d_{\epsilon}\big)\cdot\nabla d_{\epsilon}\mathcal B\big[\rho_{\epsilon}^{\theta}-\oint_{\Omega}\rho_{\epsilon}^\theta \big]\Big|\\
&\leq& C\Big\|\triangle d_{\epsilon}+\frac{1}{\epsilon^2}(1-|d_{\epsilon}|^2)d_{\epsilon}\Big\|_{L^2(Q_T)}\cdot\\
&&\Big\|\nabla d_{\epsilon}\Big\|_{L^\infty([0,T],L^2(\Omega))}
\Big\|\mathcal B\big[\rho_{\epsilon}^{\theta}-\oint_{\Omega}\rho_{\epsilon}^\theta\big]\Big\|_{L^2([0,T],L^\infty(\Omega))}\\
&\leq& C\Big\|\big\|\mathcal B\big[\rho_{\epsilon}^{\theta}-\oint_{\Omega}\rho_{\epsilon}^\theta \big]\big\|_{W^{1,\frac{\gamma}{\theta}}(\Omega)}\Big\|_{L^2([0,T])}\\
&\leq& C\sup\limits_{0<t\leq T}\Big\|\rho_\epsilon^\theta\Big\|_{L^{\frac{\gamma}{\theta}}(\Omega)}
\leq C\Big\|\rho_{\epsilon}\Big\|_{L^\infty([0,T], L^{\gamma}(\Omega))}^\theta,
\eeqno
provided $\theta<\frac{\gamma}{3}$, where we have used the energy inequality (\ref{global_bd})
and the Sobolev embedding theorem $W^{1,\frac{\gamma}{\theta}}(\Omega)\subset L^\infty(\Omega)$.

\noindent (7) For $L_7^m$, we have
\beqno
\big|L_7^m\big|&=&\Big|\int_0^T\int_{\Omega}|\varphi\rho_\epsilon u_\epsilon
\mathcal B\big[\nabla\cdot(\rho_\epsilon^\theta u_\epsilon)\big]\Big|\\
&\leq& C\int_0^T\big\|\rho_\epsilon\big\|_{L^\gamma(\Omega)}\big\|u_\epsilon\big\|_{L^6(\Omega)}
\big\|\mathcal B\big[\nabla\cdot(\rho_\epsilon^\theta u_\epsilon)\big]\big\|_{L^{\frac{6\gamma}{5\gamma-6}}(\Omega)}dt\\
&\leq& C\int_0^T\big\|\rho_\epsilon\big\|_{L^\gamma(\Omega)}
\big\|u_\epsilon\big\|_{L^6(\Omega)}\big\|\rho_\epsilon^\theta u_\epsilon\big\|_{L^{\frac{6\gamma}{5\gamma-6}}(\Omega)}dt\\
&\leq& C\int_0^T\big\|\rho_\epsilon\big\|_{L^\gamma(\Omega)}
\big\|u_\epsilon\big\|_{L^6(\Omega)}^2\big\|\rho_\epsilon\|_{L^{\frac{3\gamma\theta}{2\gamma-3}}(\Omega)}^\theta dt
\eeqno
is uniformly bounded, provided $\theta<\frac{2\gamma}{3}-1.$

It is clear that we can choose sufficiently small $\theta>0$ depending only on $\gamma$ such that
all these estimates on $L_i^m,$  $i=1,\cdots,7,$ hold.
Therefore, by putting together (1), $\cdots$, (7), we obtain the estimate (\ref{rho-bd}).

\subsection{$\rho_{\epsilon}u_{\epsilon}\otimes u_\epsilon\rightarrow\rho u\otimes u$ in the sense of distributions}

As $\rho_{\epsilon}u_{\epsilon}$ is bounded in
$L^{\infty}([0,T], L^{\frac{2\gamma}{\gamma+1}}(\Omega))$, it follows from the section 5.3 that
\beq\label{4.22}
\rho_{\epsilon}u_{\epsilon}\rightharpoonup\rho u\ \mbox{weak}^*\ \mbox{in}\ L^{\infty}([0,T], L^{\frac{2\gamma}{\gamma+1}}(\Omega)).
\eeq
Meanwhile,
since \beqno
&&\partial_t(\rho_\epsilon u_{\epsilon})=-\nabla\cdot(\rho_{\epsilon}u_{\epsilon}\otimes u_{\epsilon})+a\nabla\rho_\epsilon^{\gamma}\\
&&\qquad\qquad\ \ +\mathcal Lu_\epsilon-\nabla\cdot\Big[\nabla d_\epsilon\odot\nabla d_\epsilon-\big(\frac12|\nabla d_\epsilon|^2
+\frac{(1-|d_\epsilon|^2)^2}{4\epsilon^2}\big)\mathbb I_3\Big]\\
&&\qquad\qquad\in L^2([0,T],W^{-1,\frac{6\gamma}{4\gamma+3}}(\Omega))
+L^{\frac{\gamma+\theta}{\gamma}}([0,T],W^{-1,\frac{\gamma+\theta}{\gamma}}(\Omega))\\
&&\qquad\qquad\ \ +L^2([0,T], H^{-1}(\Omega))+L^\infty([0,T], W^{-2, \frac54}(\Omega)),
\eeqno
we have that
\beq
\label{4.23}
\rho_\epsilon u_{\epsilon}\rightarrow\rho u\ \mbox{in}\ C([0,T],L^{\frac{2\gamma}{\gamma+1}}_{\rm{weak}}(\Omega))\ {\rm{and}}\ \rho_\epsilon u_{\epsilon}\rightarrow\rho u\ \mbox{in}\ C([0,T],H^{-1}(\Omega)).
\eeq
Hence we obtain
\beq\label{4.24}\rho_\epsilon u_{\epsilon}\otimes u_{\epsilon}\rightarrow\rho u\otimes u
\ \ {\rm{in}}\ \ \mathcal D'(Q_T).
\eeq
It follows from \S5.1, \S5.2, \S5.3, and \S5.4
that, after sending $\epsilon\rightarrow 0^+$ in the equation
(\ref{3.3}), $(\rho,u,d)$  satisfies the following system:
\beq\label{limit_eqn3.1}
\begin{cases}
\partial_t\rho+\nabla\cdot(\rho u)=0,\\
\partial_t(\rho u)+\nabla\cdot(\rho u\otimes u)+a\nabla\overline{\rho^{\gamma}}
=\mathcal{L}u-\nabla\cdot\big[\nabla d\odot\nabla d-\frac12|\nabla d|^2\mathbb{I}_3\big],\\
\partial_t d+u\cdot\nabla d=\triangle d+|\nabla d|^2d,
\end{cases} \eeq
in the sense of distributions,
where $\overline{\rho^\gamma}$ is a weak limit of $\rho_\epsilon^\gamma$ in $L^{\frac{\gamma+\theta}{\gamma}}(Q_T)$.

It is straightforward that $(\rho, u, d)$ satisfies the first two equations of (\ref{limit_eqn3.1}).
To see $(u,d)$ solves the third equation of (\ref{limit_eqn3.1}), we employ the standard technique,
due to Chen \cite{Chen}, as follows. Let $\times$ denote the cross product
in $\mathbb R^3$. Then the equation $(\ref{3.3})_3$ for $(u_\epsilon, d_\epsilon)$ can be rewritten
as
$$(\partial_t d_\epsilon+u_\epsilon\cdot\nabla d_\epsilon)\times d_\epsilon=\Delta d_\epsilon\times d_\epsilon,
\ {\rm{in}}\ \mathcal D'(Q_T).$$
After taking $\epsilon\rightarrow 0$, we have that $(u,d)$ satisfies
\begin{equation}\label{d-eqn}
(\partial_t d+u\cdot\nabla d)\times d=\Delta d\times d, \ {\rm{in}}\ \mathcal D'(Q_T).
\end{equation}
Since $|d|=1$, the equation (\ref{d-eqn}) is equivalent to $(\ref{limit_eqn3.1})_3$.

In order to identify $\overline{\rho^\gamma}$, we need to establish the strong convergence of
$\rho_\epsilon$ to $\rho$ in $L^\gamma(Q_T)$. To do it, we need to have fine
estimates of the effective viscous flux, which has played important rules in the study of compressible
Navier-Stokes equations (see \cite{F1} and \cite{L1}).

For $k\ge 1$, define $T_k(z)=kT(\frac{z}{k}):\mathbb R\to\mathbb R$,
where $T(z)\in C^\infty(\mathbb R)$ is a concave function such that
\beqno T(z)=
  \begin{cases}
    z, & \mbox{if}\ z\leq 1, \\
    2, & \mbox{if}\ z\geq 3.
  \end{cases}
\eeqno

\subsection{Fine estimates of effective viscous flux $H_\epsilon:=a\rho_\epsilon^\gamma-\widetilde{\mu}\nabla\cdot u_\epsilon$}
For any fixed $k\ge 1$, there holds
\beq\label{4.27}
\lim\limits_{\epsilon\rightarrow0}\int_0^T\int_{\Omega}\psi\phi
\big(a\rho_\epsilon^\gamma-\widetilde{\mu}\nabla\cdot u_\epsilon\big)T_k(\rho_\epsilon)
=\int_0^T\int_{\Omega}\psi\phi\big(a(\overline{\rho^\gamma})-\widetilde{\mu}\nabla\cdot u\big)\overline{T_k(\rho)},
\eeq
for any $\psi\in C_0^\infty((0,T))$ and $\phi\in C_0^\infty(\Omega)$.
By density arguments, similar to \cite{F1}, it is not hard to see that (\ref{4.27}) remains to be true for $\phi=\psi=1$.

Since $(\rho_\epsilon,u_\epsilon)$ is a renormalized solution to $(\ref{3.3})_1$ in $Q_T$, it is clear
that if we extend $(\rho_\epsilon, u_\epsilon)$ to $\mathbb R^3$ by letting it to be zero in $\mathbb R^3\setminus
\Omega$, then $(\rho_\epsilon, u_\epsilon)$ is also a renormalized solution of $(\ref{3.3})_1$ in $\mathbb R^3$.
Replacing $b(z)$ by $T_k(z)$ in $(\ref{3.3})_1$  yields
\begin{equation}\label{tk-rho}
\partial_t \big(T_k(\rho_\epsilon)\big)
+\nabla\cdot(T_k(\rho_\epsilon)u_\epsilon)+\big(T'(\rho_\epsilon)\rho_\epsilon-T_k(\rho_\epsilon)\big)
\nabla\cdot u_\epsilon=0\  \mbox{in}\ \mathcal D'(\mathbb R^3\times (0,T)).
\end{equation}
Since $T_k(\rho_\epsilon)$ is bounded in $L^\infty(Q_T)$, we have
$$T_k(\rho_\epsilon)\rightharpoonup\overline{T_k(\rho)}\ \ \mbox{weak}^* \mbox{in}\ L^\infty(Q_T).
$$
This, combined with the equation (\ref{tk-rho}), implies that for any $p\in (1,+\infty),$
\beq\label{4.29}
T_k(\rho_\epsilon)\rightarrow \overline{T_k(\rho)}\ \ \mbox{in}\ C([0,T],L^p_{\rm{weak}}(\Omega))
\ \ \mbox{and\ in}\ \ C([0,T],H^{-1}(\Omega)).
\eeq
Hence, after sending $\epsilon\rightarrow0$ in the equation (\ref{tk-rho}), we have
\beq\label{4.30}
\partial_t\overline{T_k(\rho)}+\nabla\cdot(\overline{T_k(\rho)}u)+\overline{\big(T'_k(\rho)\rho-T_k(\rho)\big)\nabla\cdot u}=0\ \ \ \mbox{in}\ \ \mathcal D'(Q_T),
\eeq
where $\overline{\big(T'_k(\rho)\rho-T_k(\rho)\big)\nabla\cdot u}$ is a weak limit of
$\big(T_k'(\rho_\epsilon)\rho_\epsilon-T_k(\rho_\epsilon)\big)\nabla\cdot u_\epsilon$  in $\ L^2(Q_T)$.

Now we need to estimate the effective viscous flux $(a\rho_\epsilon^\gamma-\widetilde{\mu}\nabla\cdot u_\epsilon)$.
Define the operator $\mathcal{A}=(\mathcal{A}_1,\mathcal{A}_2,\mathcal{A}_3)$  by letting $$\mathcal{A}_i=\partial_{x_i}\triangle^{-1}$$
for $i=1,2,3$, where $\triangle^{-1}$ denote the inverse of the Laplace operator on $\mathbb R^3$
(see \cite{FNP1}). By the $L^p$ regularity theory of the Laplace equation,
we have
\beq\label{4.31}
\begin{cases}\big\|\mathcal{A} v\big\|_{W^{1,s}(\Omega)}\leq C\big\|v\big\|_{L^s(\mathbb R^3)}, & \ 1<s<+\infty,\\
\big\|\mathcal{A} v\big\|_{L^q(\Omega)}\leq
C\big\|v\big\|_{L^s(\mathbb R^3)}, &\frac{1}{q}\geq\frac{1}{s}-\frac13, \\
\big\|\mathcal{A}v\big\|_{L^\infty(\Omega)}\leq C\big\|v\big\|_{L^s(\mathbb R^3)},
& s>3,
\end{cases}
\eeq
where $C>0$ depends only on $s$ and $\Omega.$

Testing the equation (\ref{tk-rho}) by $\mathcal{A}_i[\varphi]$ for $\varphi\in C_0^\infty(Q_T)$ yields
\beq\label{4.32}
\partial_t \big(\mathcal{A}_i[T_k(\rho_\epsilon)]\big)
+\nabla\cdot\big(\mathcal{A}_i[(T_k(\rho_\epsilon)u_\epsilon)]\big)
+\mathcal{A}_i\Big[(T_k'(\rho_\epsilon)\rho_\epsilon-T_k(\rho_\epsilon))\nabla\cdot u_\epsilon\Big]=0,
\eeq
in $\mathcal D'(\mathbb R^3\times (0,T))\cap L^2(\mathbb R^3\times (0,T)).$
This implies $\partial_t \big(\mathcal{A}_i[T_k(\rho_\epsilon)]\big)\in L^2(\mathbb R^3\times (0,T))$.
Hence we can test the equation $(\ref{3.3})_2$ by $\psi\phi\mathcal{A}[T_k(\rho_\epsilon)]$,
for $\phi\in C^\infty_0(\Omega)$ and $\psi\in C_0^\infty((0,T))$, and obtain
\beq\nonumber&&\int_0^T\int_{\Omega}\psi\phi(a\rho_\epsilon^\gamma-\widetilde{\mu}\nabla\cdot u_\epsilon)T_k(\rho_\epsilon)\\
&&=\int_0^T\int_\Omega\psi (\widetilde{\mu}\nabla\cdot u_\epsilon-a\rho_\epsilon^\gamma)\nabla\phi \mathcal A[T_k(\rho_\epsilon)]\nonumber\\
&&\nonumber\ +\mu\int_0^T\int_{\Omega}\psi\big\{\nabla\phi\nabla u_\epsilon^i\mathcal{A}_i[T_k(\rho_\epsilon)]-u_\epsilon^i\nabla\phi\nabla\mathcal{A}_i[T_k(\rho_\epsilon)]+u_\epsilon\nabla\phi T_k(\rho_\epsilon)\big\}\\
&&\nonumber\ -\int_0^T\int_{\Omega}\phi\rho_\epsilon u_\epsilon\big\{\partial_t\psi\mathcal{A}[T_k(\rho_\epsilon)]+\psi\mathcal{A}[(T_k(\rho_\epsilon)-T'_k(\rho_\epsilon)\rho_\epsilon)\nabla\cdot u_{\epsilon}]\big\}\\
\nonumber&&\ -\int_0^t\int_{\Omega}\psi\rho_\epsilon u_\epsilon^iu_\epsilon^j\partial_j\phi\mathcal{A}_i[T_k(\rho_\epsilon)]\\
\nonumber&&\ +\int_0^T\int_{\Omega}\psi u_\epsilon^i\big\{T_k(\rho_\epsilon)\mathcal R_{ij}[\phi\rho_\epsilon u_\epsilon^i]-\phi\rho_\epsilon u_\epsilon^j\mathcal R_{ij}[T_k(\rho_\epsilon)]\big\}\\
\label{4.33}&&
\ -\int_0^T\int_{\Omega}\psi\nabla\cdot\Big\{\nabla d_\epsilon\odot\nabla d_\epsilon-[\frac12|\nabla d_\epsilon|^2+\frac{1}{4\epsilon^2}(1-|d_\epsilon|^2)^2]\mathbb{I}_3\Big\}\cdot
\phi\mathcal{A}[T_k(\rho_\epsilon)],
\eeq
where $\mathcal R_{ij}=\partial_{x_j}\mathcal A_i$ is the Riesz transform.

Similarly, we can test the equation $(\ref{limit_eqn3.1})_2$ by
$\psi\phi\mathcal{A}_i[T_k(\rho)]$ and obtain
\beq\label{4.34}
\nonumber&&\int_0^T\int_{\Omega}\psi\phi(a\rho^\gamma-\widetilde{\mu}\nabla\cdot u)T_k(\rho)\nonumber\\
&&=\int_0^T\int_\Omega\psi\big(\widetilde{\mu}\nabla\cdot u-a\overline{\rho^\gamma}\big)
\nabla\phi \mathcal A[T_k(\rho)]\nonumber\\
&&\nonumber\ +\mu\int_0^T\int_{\Omega}\psi\big\{\nabla\phi\nabla u^i\mathcal{A}_i[T_k(\rho)]-u^i\nabla\phi\nabla\mathcal{A}_i[T_k(\rho)]+u\nabla\phi T_k(\rho)\big\}\\
&&\nonumber\ -\int_0^T\int_{\Omega}\phi\rho u\big\{\partial_t\psi\mathcal{A}[T_k(\rho)]
+\psi\mathcal{A}[(T_k(\rho)-T'_k(\rho)\rho)\nabla\cdot u]\big\}\\
\nonumber&&\ -\int_0^t\int_{\Omega}\psi\rho u^iu^j\partial_j\phi\mathcal{A}_i[T_k(\rho)]\\
\nonumber&&\ +\int_0^T\int_{\Omega}\psi u^i\big\{T_k(\rho)\mathcal R_{ij}[\phi\rho u^i]
-\phi\rho u^j\mathcal R_{ij}[T_k(\rho)]\big\}\\
&&\ -\int_0^T\int_{\Omega}\psi\nabla\cdot\Big[\nabla d\odot\nabla d-\frac12|\nabla d|^2\mathbb{I}_3\Big]\cdot \phi\mathcal{A}[T_k(\rho)].
\eeq
To prove (\ref{4.27}), it suffices to show that each term in the right hand side of (\ref{4.33}) converges to the
corresponding term  in the right hand side of (\ref{4.34}).
Since the convergence of the first five terms in the right hand side of (\ref{4.33})
can be done in the exact same way as in \cite{FNP1} (see also \cite{WY1}),
we only indicate how to show the convergence of the last term in the right hand side of (\ref{4.33}), namely,
\beq\label{4.35}
\nonumber&&\int_0^T\int_{\Omega}\psi\nabla\cdot\Big[\nabla d_\epsilon\odot\nabla d_\epsilon-(\frac12|\nabla d_\epsilon|^2+\frac{1}{4\epsilon^2}(1-|d_\epsilon|^2)^2)\mathbb{I}_3\Big]
\cdot \phi\mathcal{A}[T_k(\rho_\epsilon)]\\
&&\rightarrow\int_0^T\int_{\Omega}\psi\nabla\cdot\Big[\nabla d\odot\nabla d-\frac12|\nabla d|^2\mathbb{I}_3\Big]\cdot\phi\mathcal{A}[T_k(\rho)] \ {\rm{as}}\ \epsilon\rightarrow 0.
\eeq
To see this, first observe that  $T_k(\rho_\epsilon)$ is bounded
in $L^\infty(Q_T)$ and hence we have, by (\ref{4.32}), that
(see also \cite{FNP1})
\beq\label{4.36}
\mathcal{A}[T_k(\rho_\epsilon)]\rightarrow\mathcal{A}[T_k(\rho)]
\ {\rm{in}}\ C(\overline\Omega\times [0, T]).
\eeq
Secondly, observe that a.e. in $Q_T$, there holds
\begin{eqnarray*}
\nabla\cdot\Big[\nabla d_\epsilon\odot\nabla d_\epsilon-(\frac12|\nabla d_\epsilon|^2+\frac{1}{4\epsilon^2}(1-|d_\epsilon|^2)^2)\mathbb{I}_3\Big]&=&\big(\Delta d_\epsilon+\frac{1}{\epsilon^2}(1-|d_\epsilon|^2)d_\epsilon\big)\cdot\nabla d_\epsilon\\
&=&\big(\partial_t d_\epsilon+u_\epsilon\cdot\nabla d_\epsilon\big)\cdot\nabla d_\epsilon.
\end{eqnarray*}
By the energy inequality (\ref{global_bd}), we see that
$\big(\partial_t d_\epsilon+u_\epsilon\cdot\nabla d_\epsilon\big)$ is bounded in $L^2(Q_T)$ and hence
there exists $v\in L^2(Q_T)$ such that
\begin{equation}\label{weak1}
\big(\partial_t d_\epsilon+u_\epsilon\cdot\nabla d_\epsilon\big)\rightharpoonup v
\ {\rm{in}}\ L^2(Q_T).
\end{equation}
On the other hand, since $d_\epsilon\rightarrow d$ in $L^2([0,T], H^1_{\rm{loc}}(\Omega))$ and $u_\epsilon\rightharpoonup
u$ in $L^2([0,T], H^1_{0}(\Omega))$, we have that
$$\big(\partial_t d_\epsilon+u_\epsilon\cdot\nabla d_\epsilon\big)\rightarrow
 \big(\partial_t d+u\cdot\nabla d\big)
\ {\rm{in}}\ \mathcal D'(Q_T).$$
Hence we have
\begin{equation}
\label{weak2}
v=\partial_t d+u\cdot\nabla d\ \ {\rm{in}}\ \ Q_T.
\end{equation}
By (\ref{4.36}) and the local $L^2$-convergence of $\nabla d_\epsilon$ to $\nabla d$ in $Q_T$, we know that
$$\nabla d_\epsilon \phi \mathcal A[T_k(\rho_\epsilon)]\rightarrow \nabla d \phi \mathcal A[T_k(\rho)]
\ \ \ {\rm{in}}\ \ \  L^2(Q_T).$$
Hence we obtain
\beq\label{4.39}
\nonumber&&\int_0^T\int_{\Omega}\psi\nabla\cdot\Big[\nabla d_\epsilon\odot\nabla d_\epsilon-(\frac12|\nabla d_\epsilon|^2+\frac{1}{4\epsilon^2}(1-|d_\epsilon|^2)^2)\mathbb{I}_3\Big]
\cdot \phi\mathcal{A}[T_k(\rho_\epsilon)]\\
&&\rightarrow\int_0^T\int_{\Omega}\psi (\partial_t d+u\cdot\nabla d)\nabla d\cdot\phi \mathcal A[T_k(\rho)]
\ {\rm{as}}\ \epsilon\rightarrow 0.
\eeq
Applying the equation $(\ref{limit_eqn3.1})_3$ and the fact that $\langle|\nabla d|^2 d,\nabla d\rangle=0$ a.e. in $Q_T$,
we obtain
\begin{eqnarray}\label{4.40}
&&\int_0^T\int_{\Omega}\psi (\partial_t d+u\cdot\nabla d)\nabla d\cdot\phi \mathcal A[T_k(\rho)]\nonumber\\
&=&\int_0^T\int_{\Omega}\psi (\Delta d+|\nabla d|^2 d)\nabla d\cdot\phi \mathcal A[T_k(\rho)]\nonumber\\
&=&\int_0^T\int_{\Omega}\psi \Delta d\nabla d\cdot\phi \mathcal A[T_k(\rho)]\nonumber\\
&=&\int_0^T\int_\Omega \psi\nabla\cdot\Big[\nabla d\odot\nabla d-\frac12|\nabla d|^2 \mathbb I_3\Big]\cdot
\phi\mathcal A[T_k(\rho)].
\end{eqnarray}
It is easy to see that (\ref{4.35}) follows from (\ref{4.39}) and (\ref{4.40}).

In order to show the strong convergence of $\rho_\epsilon$, we also need to estimate on the oscillation defect measure
of ($\rho_\epsilon-\rho$) in $L^\gamma(Q_T)$.

\subsection{Estimate of oscillation of defect measures}
{There exists $C>0$ such that for any $k\geq 1$, there holds
\beq\label{4.41}
\limsup\limits_{\epsilon\rightarrow0}\Big\|T_k(\rho_\epsilon)-T_k(\rho)\Big\|_{L^{\gamma+1}(Q_T)}^{\gamma+1}\leq\lim\limits_{\epsilon\rightarrow0}\int_0^T\int_{\Omega}\Big[\rho_\epsilon^\gamma T_k(\rho_\epsilon)-\overline{\rho^\gamma}\overline{T_k(\rho)}\Big]\leq C,
\eeq
where $\overline{T_k(\rho)}$ is a weak$^*$ limit of $T_k(\rho_\epsilon)$ in $L^\infty(Q_T)$.}

Following the lines of argument presented in  \cite{FNP1} and using (\ref{4.27}), we obtain
\begin{eqnarray*}
&&\limsup\limits_{\epsilon\rightarrow0}\Big\|T_k(\rho_\epsilon)-T_k(\rho)\Big\|_{L^{\gamma+1}(Q_T)}^{\gamma+1}\\
&&\leq\lim\limits_{\epsilon\rightarrow0}\int_0^T\int_{\Omega}\Big[\rho_\epsilon^\gamma T_k(\rho_\epsilon)-\overline{\rho^\gamma}\overline{T_k(\rho)}\Big]\\
&&=\frac{\widetilde\mu}{a}\lim_{\epsilon\rightarrow  0}\int_0^T\int_\Omega
\Big[(\nabla\cdot u_\epsilon)\big(T_k(\rho_\epsilon)-\overline{T_k(\rho)}\big)\Big]\\
&&\le C\Big(\sup_{\epsilon>0}\big\|\nabla u_\epsilon\big\|_{L^2(Q_T)}\Big)
\limsup_{\epsilon\rightarrow 0}\Big[\big\|T_k(\rho_\epsilon)-T_k(\rho)\big\|_{L^2(Q_T)}\\
&&\qquad+\big\|T_k(\rho)-\overline{T_k(\rho)}\big\|_{L^2(Q_T)}\Big]\\
&&\le C\limsup_{\epsilon\rightarrow 0}\big\|T_k(\rho_\epsilon)-T_k(\rho)\big\|_{L^2(Q_T)},
\end{eqnarray*}
this implies (\ref{4.41}) by applying Young's inequality and using $\gamma+1>2$.

We now want to show

\medskip
\noindent{\it Claim 1}.  {\it $(\rho,u)$  is a renormalized solution to the equation $(\ref{limit_eqn3.1})_1$.}

Observe that (\ref{4.30}) also holds for $(\rho_\epsilon, u_\epsilon)$ in $\mathbb R^3$ provided it is set to be zero
in $\mathbb R^3\setminus\Omega$.
Hence we have
\beq\label{4.42}
\partial_t\overline{T_k(\rho)}+\nabla\cdot(\overline{T_k(\rho)}u)+\overline{\big(T'_k(\rho)\rho-T_k(\rho)\big)\nabla\cdot u}=0\ \ \ \mbox{in}\ \ \mathcal D'(\mathbb R^3\times (0,T)).
\eeq
As in the step 3, we can mollify (\ref{4.42}) and obtain
\beq\label{4.43}\partial_t\big(S_m\Big[\overline{T_k(\rho)}\Big]\big)
+\nabla\cdot\big (S_m\Big[\overline{T_k(\rho)}\Big]u\big)+S_m\Big[\overline{[T'_k(\rho)\rho-T_k(\rho)]\nabla\cdot u}\Big]
=q_m,\eeq
where
$$q_m:=\nabla\cdot\big(S_m\Big[\overline{T_k(\rho)}\Big]u\big)-S_m\Big[\nabla\cdot(\overline{T_k(\rho)}u)\Big]
\rightarrow 0 \ {\rm{in}}\ L^2([0,T],L^s(\Omega)),\ \ {\rm{as}}\ m\rightarrow \infty,
$$
for any $s\in[1,2)$, by virtue of Lemma 2.3 in \cite{L1}.

Let $b$ be a test function in the definition of renormalized solutions of $(\ref{limit_eqn3.1})_1$.
Multiplying (\ref{4.43}) by $b'\Big(S_m\big[\overline{T_k(\rho)}\big]\Big)$
yields
\begin{eqnarray*}
&&\partial_t\big(b(S_m\Big[\overline{T_k(\rho)}\Big])\big)
+\nabla\cdot\big(b\big(S_m\Big[\overline{T_k(\rho)}\Big]\big)u\big)
+\Big(b'(S_m\Big[\overline{T_k(\rho)}\Big])S_m\Big[\overline{T_k(\rho)}\Big]
-b(S_m\Big[\overline{T_k(\rho)}\Big])\Big)\nabla\cdot u\\
&&=-b'(S_m\Big[\overline{T_k(\rho)}\Big])S_m\Big(\overline{[T'(\rho)\rho-T_k(\rho)]\nabla\cdot u}\Big)
+b'(S_m\Big[\overline{T_k(\rho)}\Big])q_m.
\end{eqnarray*}
Sending $m\rightarrow+\infty$ in the above equation yields
that
\begin{eqnarray}
&&\nonumber
\partial_t\big(b\Big(\overline{T_k(\rho)}\Big)\big)
+\nabla\cdot\big(b\Big(\overline{T_k(\rho)}\Big)u\big)
+\Big(b'\Big(\overline{T_k(\rho)}\Big)\overline{T_k(\rho)}-b\Big(\overline{T_k(\rho)}\Big)\Big)\nabla\cdot u\\
&&=-b'\Big(\overline{T_k(\rho)}\Big)\overline{\big[T'(\rho)\rho-T_k(\rho)\big]\nabla\cdot u}
\ \ \ \ {\rm{in}}\ \ \ \mathcal D'\big(\mathbb R^3\times (0,T)\big). \label{rho-renorm1}
\end{eqnarray}
On the other hand, for $p\in [1,\gamma)$, we have
\beq\label{4.44}
\big\|\overline{T_k(\rho)}-\rho\big\|_{L^p(Q_T)}^p
\leq\liminf\limits_{\epsilon\rightarrow 0}
\big\|T_k(\rho_\epsilon)-\rho_\epsilon\big\|_{L^p(Q_T)}^p.
\eeq
On the other hand,
we have
\beq\big\|T_k(\rho_\epsilon)-\rho_\epsilon\big\|_{L^p(Q_T)}^p
&\leq&\int_{\{\rho_\epsilon\geq k\}}\big|kT(\frac{\rho_\epsilon}{k})-\rho_\epsilon\big|^p\nonumber\\
&\leq& 2^p\int_{\{\rho_\epsilon\geq k\}}\big|\rho_\epsilon\big|^p \ \big({\rm{since}}\ \displaystyle
kT(\frac{\rho_\epsilon}{k})\leq \rho_\epsilon \big)\nonumber\\
&\leq&2^pk^{-\gamma+p}\int_{\{\rho_\epsilon\geq k\}}\rho_\epsilon^\gamma\nonumber\\
&\leq& Ck^{-\gamma+p}\rightarrow0,\  \mbox{as}\ k\rightarrow+\infty, \mbox{uniformly\ in}\ \epsilon.
\label{4.45}
\eeq
It follows from (\ref{4.44}) and (\ref{4.45})  that
\begin{equation}\label{4.46}
\lim\limits_{k\rightarrow+\infty}\Big\|\overline{T_k(\rho)}-\rho\Big\|_{L^{p}(Q_T)}=0,
\ \mbox{for}\ p\in[1,\gamma).
\end{equation}
For any $M>0$ so large that $b'(z)=0$ for $z\ge M$,  we set
$$Q_{k,M}:=\Big\{(x,t)\in Q_T\ \big|\ \overline{T_k(\rho)}\leq M\Big\}.
$$
Then \beq\nonumber&&\int_0^T\int_{\Omega}\Big|b'\Big(\overline{T_k(\rho)}\Big)
\overline{\big[T'_k(\rho)\rho-T_k(\rho)\big]\nabla\cdot u}\Big|\\
&&\nonumber=\int_{Q_{k,M}}\Big|b'\Big(\overline{T_k(\rho)}\Big)\overline{\big[T'_k(\rho)\rho-T_k(\rho)\big]\nabla\cdot u}\Big|\\
&&\nonumber\leq\sup\limits_{Q_{k,M}}\Big|b'(\overline{T_k(\rho)})\Big|
\int_{Q_{k,M}}\Big|\overline{\big[T^{'}_k(\rho)\rho-T_k(\rho)\big]\nabla\cdot u}\Big|\\
&&\nonumber\leq\sup\limits_{0\leq z\leq M}\big|b'(z)\big|\liminf\limits_{\epsilon\rightarrow0}
\int_{Q_{k,M}}\Big|\big[T'_k(\rho_\epsilon)\rho_{\epsilon}-T_k(\rho_\epsilon)\big]\nabla\cdot u_\epsilon\Big|\\
&&\nonumber\leq C\liminf\limits_{\epsilon\rightarrow0}\big\|\nabla u_\epsilon\big\|_{L^2(Q_T)}
\big\|T'_k(\rho_\epsilon)\rho_{\epsilon}-T_k(\rho_\epsilon) \big\|_{L^2(Q_{k,M})}\\
&&\leq C\liminf\limits_{\epsilon\rightarrow0}
\big\|T'_k(\rho_\epsilon)\rho_{\epsilon}-T_k(\rho_\epsilon) \big\|_{L^1(Q_T)}^{\frac{1}{2}-\frac{1}{2\gamma}}
\big\|T'_k(\rho_\epsilon)\rho_{\epsilon}-T_k(\rho_\epsilon) \big\|_{L^{\gamma+1}(Q_{k,M})}^{\frac{1}{2}+\frac{1}{2\gamma}}.
\label{4.47}
\eeq
Now we can estimate
\beq\label{4.48}
\big\|T'_k(\rho_\epsilon)\rho_{\epsilon}-T_k(\rho_\epsilon)\big\|_{L^1(Q_T)}
\leq 2k^{1-\gamma}\sup\limits_{\epsilon}\|\rho_\epsilon\|_{L^{\gamma}}^\gamma\leq Ck^{1-\gamma}\rightarrow 0,
\ {\rm{as}}\ k\rightarrow\infty.
\eeq
On the other hand, since $T_k(z)$ is a concave function and $T_k''(z)\leq 0$, we have, by Taylor's expansion,
that
$$0=T_k(z)-T'_k(z)z+\frac12T''(\xi z)z^2 \ {\rm{for\ some}} \ \xi\in (0,1).$$
In particular, we have  $T'_k(z)z\leq T_k(z)$ and hence
 \beq\nonumber&&
\big\|T'_k(\rho_\epsilon)\rho_{\epsilon}-T_k(\rho_\epsilon) \big\|_{L^{\gamma+1}(Q_{k,M})}
\leq 2\big\|T_k(\rho_\epsilon)\big\|_{L^{\gamma+1}(Q_{k,M})}\\
&&\nonumber\leq 2\Big(\big\|T_k(\rho_\epsilon)-T_k(\rho)\big\|_{L^{\gamma+1}(Q_{k,M})}
+\big\|T_k(\rho)-\overline{T_k(\rho)}\big\|_{L^{\gamma+1}(Q_{k,M})}
+\big\|\overline{T_k(\rho)}\big\|_{L^{\gamma+1}(Q_{k,M})}\Big)\\
&&\nonumber\leq 2\Big(\big\|T_k(\rho_\epsilon)-T_k(\rho)\big\|_{L^{\gamma+1}(Q_T)}+
\big\|T_k(\rho)-\overline{T_k(\rho)}\big\|_{L^{\gamma+1}(Q_T)}+
M|Q_{k,M}|^{\frac{1}{\gamma+1}}\Big).
\eeq
Applying (\ref{4.41}), we then obtain that there exists $C>0$ independent of $k$ such that
\beq\label{4.49}
\limsup\limits_{\epsilon\rightarrow0}
\big\|T'_k(\rho_\epsilon)\rho_\epsilon-T_k(\rho_\epsilon)\big\|_{L^{\gamma+1}(Q_{k,M})}
\leq C\big(1+M|Q_{k,M}|^{\frac{1}{\gamma+1}}\big)
\leq C.
\eeq
Substituting (\ref{4.48}) and (\ref{4.49}) into (\ref{4.47}) yields
\begin{equation}
\label{4.50}
\lim_{k\rightarrow\infty}\int_0^T\int_{\Omega}\Big|b'\Big(\overline{T_k(\rho)}\Big)
\overline{\big[T'_k(\rho)\rho-T_k(\rho)\big]\nabla\cdot u}\Big|=0.
\end{equation}
Sending $k\rightarrow\infty$ into the equation (\ref{rho-renorm1})
and applying (\ref{4.46}), (\ref{4.50}), we conclude that
$(\rho,u)$ is a renormalized solution of the equation (\ref{limit_eqn3.1}).
This proves Claim 1.

\subsection{$\rho_\epsilon\rightarrow\rho$ strongly in $L^p(Q_T)$
for any $1\le p<\gamma+\theta$. Hence $\overline{\rho^\gamma}=\rho^\gamma$ a.e.
in $Q_T$.}

It suffices to show that $\rho_\epsilon\rightarrow \rho$ in $L^1(Q_T)$. This can be done
in the exactly same lines as in \cite{FNP1}. Here we sketch it for the readers' convenience.
Let $L_k(z)\in C^1(0,+\infty)\cap C([0,+\infty))$ be defined by
\beqno L_k(z)=\begin{cases}z\ln z, & 0\leq z\leq k,\\
                      z\ln k+z\int_{k}^z\frac{T_k(s)}{s^2}ds,  & z>k.
\end{cases}
\eeqno
Note that for $z$ large enough, $L_k(z)$ is a linear function,
i.e., for $z\geq3k,$
$$L_k(z)=\beta_kz-2k,\ \ \mbox{with} \ \ \beta_k=\ln k+\int_k^{3k}\frac{T_k(s)}{s^2}ds+\frac23.$$
Therefore $b_k(z):=L_k(z)-\beta_k z\in C^1(0,+\infty)\cap C([0,+\infty))$ satisfies
$b'_k(z)=0$ for $z$ large enough. Moreover, it is easy to see
$$b'_k(z)z-b_k(z)=T_k(z).$$
Since $(\rho_\epsilon,u_\epsilon)$ is a renormalized solution of the equation $(\ref{3.3})_1$
and $(\rho,u)$ is a renormalized solution of the equation $(\ref{limit_eqn3.1})_2$,
we can take $b(z)=b_k(z)$  in the definition of the renormalized solutions to get that
\begin{equation}
\label{4.52}
\partial_t L_k(\rho_\epsilon)+\nabla\cdot\big(L_k(\rho_\epsilon)u_\epsilon\big)
+T_k(\rho_\epsilon)\nabla\cdot u_\epsilon=0, \ {\rm{in}}\  \mathcal D'(Q_T),
\end{equation}
and \begin{equation}
\label{4.53}\partial_t L_k(\rho)+\nabla\cdot(L_k(\rho)u)+T_k(\rho)\nabla\cdot u=0,
\ {\rm{in}}\ \mathcal D'(Q_T).
\end{equation}
Subtracting (\ref{4.52}) from (\ref{4.53}) gives
\begin{equation}
\label{4.54}\partial_t \big(L_k(\rho_\epsilon)-L_k(\rho)\big)
+\nabla\cdot\big(L_k(\rho_\epsilon)u_\epsilon-L_k(\rho)u\big)+
\big(T_k(\rho_\epsilon)\nabla\cdot u_\epsilon-T_k(\rho)\nabla\cdot u\big)=0,
\end{equation}
in $\mathcal D'(Q_T)$.

Since $L_k(z)$ is a linear function for $z$ sufficiently large,
we have that $L_k(\rho_\epsilon)$ is bounded in $L^\infty([0,T], L^\gamma(\Omega))$,
uniformly in $\epsilon$.
Thus we have
$$L_k(\rho_\epsilon)\rightharpoonup\overline{L_k(\rho)}\ \ \ \mbox{weak}^*\ \mbox{in}\ L^\infty([0,T],
L^\gamma(\Omega)),\ \ \mbox{as}\ \ \epsilon\rightarrow 0.$$
This, combined with the equation (\ref{4.53}), implies
\beq\label{4.55}
L_k(\rho_\epsilon)\rightharpoonup\overline{L_k(\rho)}\ \ \mbox{in}\ C([0,T],L^\gamma_{\rm{weak}}(\Omega))
\cap C([0,T], H^{-1}(\Omega)),\ \ \mbox{as}\ \ \epsilon\rightarrow0.
\eeq
In particular, we have
\begin{equation}
\label{4.56}
L_k(\rho_\epsilon),\ \ L_k(\rho)\in C([0,T], L^\gamma_{\rm{weak}}(\Omega)).
\end{equation}
Hence we can multiply the equation (\ref{4.54}) by $\phi\in C_0^\infty(\Omega)$ and integrate the resulting equation
over $Q_t$, $0<t\le T$, to obtain
\beqno&&\int_{\Omega}[{L_k(\rho_\epsilon)}-L_k(\rho)](t)\phi \\
&&=\int_0^t\int_{\Omega}\big\{[L_k(\rho_\epsilon)u_\epsilon-L_k(\rho)u]\cdot\nabla\phi+[T_k(\rho)\nabla\cdot u-T_k(\rho_\epsilon)\nabla\cdot u_\epsilon]\phi\big\},
\eeqno
where we have used the fact that $[L_k(\rho_\epsilon)-L_k(\rho)]\big|_{t=0}=0$.
Taking $\epsilon\rightarrow0$ in the above equation  yields
\beq\nonumber\int_{\Omega}\Big[\overline{L_k(\rho)}-L_k(\rho)\Big](t)\phi &=&
\int_0^t\int_{\Omega}\Big[\overline{L_k(\rho)}-L_k(\rho)\Big]u\cdot\nabla\phi\\
&&+\lim\limits_{\epsilon\rightarrow0}\int_0^t\int_{\Omega}
\big[T_k(\rho)\nabla\cdot u-T_k(\rho_\epsilon)\nabla\cdot u_\epsilon\big]\phi.
\label{4.57}
\eeq
As in \cite{FNP1}, we can choose $\phi=\phi_m\in C_0^\infty(\Omega)$ in (\ref{4.57}), which
approximates the characteristic function of $\Omega$, i.e.,
\beq\label{4.58}
\begin{cases}0\leq\phi_m\leq1, \ \ \phi_m(x)=1\ \mbox{for}\ x\in\Omega \ {\rm{with\ dist}}(x,\partial\Omega)\geq\frac{1}{m},\\
\phi_m\rightarrow 1 \ {\rm{in}}\ \Omega \ {\rm{as}}\ m\rightarrow\infty
\ {\rm{and}}\ |\nabla\phi_m(x)|\leq2m\ \ \mbox{for\ all\ } x\in\Omega.
\end{cases}
\eeq
We then obtain that for $0<t\le T$, it holds
$$
\int_{\Omega}\Big[\overline{L_k(\rho)}-L_k(\rho)\Big](t)=
\lim\limits_{\epsilon\rightarrow0}\int_0^t\int_{\Omega}
\big[T_k(\rho)\nabla\cdot u-T_k(\rho_\epsilon)\nabla\cdot u_\epsilon\big].
$$
Hence we have
\beq\nonumber&&\int_{\Omega}\Big[\overline{L_k(\rho)}-L_k(\rho)\Big](t)\\
\nonumber&&=\int_0^t\int_{\Omega}T_k(\rho)\nabla\cdot u-\lim\limits_{\epsilon\rightarrow0}\int_0^t\int_{\Omega}T_k(\rho_\epsilon)\nabla\cdot u_\epsilon\\
\nonumber&&=\int_0^t\int_{\Omega}T_k(\rho)\nabla\cdot u+\frac{1}{\widetilde{\mu}}\lim\limits_{\epsilon\rightarrow0}\int_0^t\int_{\Omega}(a\rho^\gamma_\epsilon-\widetilde{\mu}\nabla\cdot u_\epsilon)T_k(\rho_\epsilon)
-\frac{a}{\widetilde{\mu}}\lim\limits_{\epsilon\rightarrow0}\int_0^t\int_{\Omega}\rho_\epsilon^\gamma T_k(\rho_\epsilon)\\
\nonumber&&=\int_0^t\int_{\Omega}T_k(\rho)\nabla\cdot u+\frac{1}{\widetilde{\mu}}\int_0^t\int_{\Omega}\Big(a\overline{\rho^\gamma}-\widetilde{\mu}\nabla\cdot  u\Big)
\overline{T_k(\rho)}-\frac{a}{\widetilde{\mu}}\lim\limits_{\epsilon\rightarrow0}\int_0^t\int_{\Omega}\rho_\epsilon^\gamma T_k(\rho_\epsilon) \big({\rm{by}}\ \big(\ref{4.27})\big)\\
&&\nonumber=\int_0^t\int_{\Omega}\Big[T_k(\rho)-\overline{T_k(\rho)}\Big]\nabla\cdot u-\frac{a}{\widetilde{\mu}}\lim\limits_{\epsilon\rightarrow0}\int_0^t\int_{\Omega}\Big[\rho_\epsilon^\gamma T_k(\rho_\epsilon)-\overline{\rho^\gamma}\overline{T_k(\rho)}\Big]\\
&&\nonumber\leq\int_0^t\int_{\Omega}\Big[T_k(\rho)-\overline{T_k(\rho)}\Big]\nabla\cdot u \ \big({\rm{by}}\ (\ref{4.41})\big)\\
&&\nonumber\leq\Big\|T_k(\rho)-\overline{T_k(\rho)}\Big\|_{L^2(\{\rho\geq k\})}\Big\|\nabla\cdot u\Big\|_{L^{2}(\{\rho\geq k\})}
+\Big\|T_k(\rho)-\overline{T_k(\rho)}\Big\|_{L^2(\{\rho\leq k\})}\Big\|\nabla\cdot u\Big\|_{L^{2}(\{\rho\leq k\})}\\
&&\nonumber\leq C\Big(\Big\|\nabla\cdot u\Big\|_{L^{2}(\{\rho\geq k\})}+
\Big\|T_k(\rho)-\overline{T_k(\rho)}\Big\|_{L^2(\{\rho\leq k\})}\Big)\\
&&\nonumber\leq
C\Big(\Big\|\nabla\cdot u\Big\|_{L^{2}(\{\rho\geq k\})}
+\Big\|T_k(\rho)-\overline{T_{k}(\rho)}\Big\|_{L^1(\{\rho\leq k\})}^{\frac{\gamma-1}{2\gamma}}
\Big\|T_k(\rho)-\overline{T_k(\rho)}\Big\|_{L^{\gamma+1}(\{\rho\leq k\})}^{\frac{\gamma+1}{2\gamma}}\Big)
\\
&&\nonumber\leq
C\Big(\Big\|\nabla\cdot u\Big\|_{L^{2}(\{\rho\geq k\})}+
\Big\|T_k(\rho)-\overline{T_{k}(\rho)}\Big\|_{L^1(\{\rho\leq k\})}^{\frac{\gamma-1}{2\gamma}}\Big),
\label{4.59}
\eeq
where we have used (\ref{4.41}) that guarantees
$$\Big\|T_k(\rho)-\overline{T_k(\rho)}\Big\|_{L^{\gamma+1}(Q_T)}
\le\liminf_{\epsilon\rightarrow 0}\Big\|T_k(\rho_\epsilon)-T_k(\rho)\Big\|_{L^{\gamma+1}(Q_T)}\le C,
$$
uniformly in $k$.

Since $T_k$ is concave, it follows $\overline{T_k(\rho)}\leq T_k(\rho)$.
By the definition of $T_k$,  we also have $T_k(\rho)\leq \rho$. Hence we have
\begin{eqnarray*}
\Big\|T_k(\rho)-\overline{T_{k}(\rho)}\Big\|_{L^1(\{\rho\leq k\})}
&\le& \Big\|\rho-\overline{T_{k}(\rho)}\Big\|_{L^1(\{\rho\leq k\})}\\
&\le &
\Big\|\rho-\overline{T_{k}(\rho)}\Big\|_{L^1(Q_T)}\rightarrow 0
\ {\rm{as}}\ k\rightarrow\infty\ \big({\rm{by}}\ (\ref{4.46})\big)
\end{eqnarray*}
Since $\nabla\cdot u\in L^2(Q_T)$, it follows that
$$\lim_{k\rightarrow\infty}\big\|\nabla\cdot u\big\|_{L^{2}(\{\rho\geq k\})}=0.$$
Therefore we obtain
\beq\label{4.59}
\lim\limits_{k\rightarrow\infty}\int_{\Omega}
\Big[\overline{L_k(\rho)}-L_k(\rho)\Big](t)\leq 0,\  t\in (0,T).
\eeq
It follows from the definition of $L_k$ that
\beq\nonumber
\int_0^T\int_{\Omega}\big|L_k(\rho)-\rho\ln\rho\big|
&\leq&\Big\|L_k(\rho)-\rho\ln\rho\Big\|_{L^1(\{\rho\geq k\})}\\
&\leq&\int\int_{\{\rho\geq k\}}|\rho\ln\rho|\rightarrow0,\ \ \mbox{as}\ k\rightarrow+\infty,
\label{4.60}
\eeq
and
\begin{eqnarray}
\label{4.60}\Big\|L_k(\rho_\epsilon)-\rho_\epsilon\ln\rho_\epsilon\Big\|_{L^1(Q_T)}
&\le&\int\int_{\{\rho_\epsilon\ge k\}}\big|L_k(\rho_\epsilon)-\rho_\epsilon\ln\rho_\epsilon\big|\nonumber\\
&\le& \int\int_{\{\rho_\epsilon\ge k\}}\frac{|L_k(\rho_\epsilon)|+|\rho_\epsilon\ln\rho_\epsilon|}{\rho_\epsilon^\gamma}
\rho_\epsilon^\gamma\nonumber\\
&\le& C(\delta)\int\int_{\{\rho_\epsilon\ge k\}}\frac{\rho_\epsilon^\gamma}
{\rho_\epsilon^{\gamma-1-\delta}} \ \ \big(\delta>0 \ {\rm{is\ sufficiently\ small}} \big)
\nonumber\\
&\leq& Ck^{-\gamma+1+\delta}\rightarrow 0,\  \mbox{as}\ k\rightarrow+\infty,\ \mbox{uniformly\ in}\ \epsilon,
\label{4.61}
\end{eqnarray}
so that by the lower semicontinuity we have
\beq\label{4.62}
\lim_{k\rightarrow\infty}\Big\|\overline{L_k(\rho)}-\overline{\rho\ln\rho}\Big\|_{L^1(Q_T)}
\leq\lim_{k\rightarrow\infty}\liminf\limits_{\epsilon\rightarrow0}
\Big\|L_k(\rho_\epsilon)-\rho_\epsilon\ln\rho_\epsilon\Big\|_{L^1(Q_T)}=0.
\eeq
Combining (\ref{4.59}), (\ref{4.60}), (\ref{4.61}), with (\ref{4.62})  implies
that
$$\int_{\Omega}\Big[\overline{\rho\ln\rho}-\rho\ln\rho\Big](t)\leq 0,\ \ t\in (0,T).$$
Since $\overline{\rho\ln\rho}\ge \rho\ln\rho$ a.e. in $Q_T$,
this implies that
$$\overline{\rho\ln\rho}=\rho\ln\rho\ \ \mbox{a.e.\ in}\ Q_T.
$$
By the convexity of the function $\omega(z)=z\ln z:(0,+\infty)\to\mathbb R$, this implies
 that $$\rho_\epsilon\rightarrow\rho\ {\rm{in}}\ L^1(Q_T).$$
Since $\rho_\epsilon$ is bounded in $L^{\gamma+\theta}(Q_T)$, it follow from
a simple interpolation that
$$\rho_\epsilon\rightarrow \rho  \ {\rm{in}}\ L^p(Q_T) \ {\rm{for\ any}}\ 1\le p<\gamma+\theta.
$$
Thus $\overline{\rho^\gamma}=\rho^\gamma$ a.e. in $Q_T$.

The energy inequality (\ref{1.9}) for $(\rho, u, d)$ follows from the energy inequality (\ref{3.6})
for $(\rho_\epsilon, u_\epsilon, d_\epsilon)$. In fact, (\ref{3.6}) implies
that for almost all $0<t<\infty$, it holds
\begin{equation}
\label{epsilon-ineq}
{\bf F}_\epsilon(t)+\int_0^t\int_\Omega \Big(\mu|\nabla u_\epsilon|^2
+\widetilde\mu|\nabla\cdot u_\epsilon|^2+|\Delta d_\epsilon+\frac{1}{\epsilon^2}(1-|d_\epsilon|^2)d_\epsilon|^2\Big)
\le {\bf F_\epsilon}(0)={\bf E}(0).
\end{equation}
On the other hand, by the lower semicontinuity, we have that for almost all $t\in (0,+\infty)$
\begin{eqnarray}&&{\bf E}(t)+\int_0^t\int_\Omega \Big(\mu|\nabla u|^2
+\widetilde\mu|\nabla\cdot u|^2+|\Delta d+|\nabla d|^2d|^2\Big)\nonumber\\
&&\le\liminf_{\epsilon\rightarrow 0}\Big\{
{\bf F}_\epsilon(t)+\int_0^t\int_\Omega \Big(\mu|\nabla u_\epsilon|^2
+\widetilde\mu|\nabla\cdot u_\epsilon|^2+|\Delta d_\epsilon+\frac{1}{\epsilon^2}(1-|d_\epsilon|^2)d_\epsilon|^2\Big)\Big\},
\label{lsc}
\end{eqnarray}
where we have used the observation that
$$\Delta d_\epsilon+\frac{1}{\epsilon^2}(1-|d_\epsilon|^2)d_\epsilon
=\partial_t d_\epsilon+u_\epsilon\cdot\nabla d_\epsilon
\rightharpoonup \partial_t d+u\cdot\nabla d
=\Delta d+|\nabla d|^2 d\ \ {\rm{in}}\ \ L^2(Q_t).$$
It is clear that (\ref{epsilon-ineq}) and (\ref{lsc}) imply (\ref{1.9}).

After these steps, we conclude that  $(\rho,u,d)$ is a global finite energy weak solution of the system
(\ref{1.1}), under the initial and boundary condition (\ref{1.2}), that satisfies the properties (i)
of Theorem \ref{th:1.1}.

The property (ii) for $(u,d)$ follows from the strong convergence of $d_\epsilon$ to $d$ in $L^2_{\rm{loc}}((0,+\infty), H^1_{\rm{loc}}(\Omega))$.
In fact, it is easy to see that $d_\epsilon\in L^2_{\rm{loc}}((0,+\infty), H^2_{\rm{loc}}(\Omega))$.
For any $X\in C_0^1(\Omega, \mathbb R^3)$ and $\eta\in C_0^1((0,+\infty))$,
we can multiply the equation $(\ref{3.3})_3$ by $\eta(t) X(x)\cdot\nabla d_\epsilon(x)$ and integrate
the resulting equation over $\Omega\times (0,+\infty)$ and apply the integration by parts a few times to obtain
\begin{equation}\label{e-stationary}
\int_0^\infty\eta(t)\int_\Omega \big(e_\epsilon(d_\epsilon) \nabla\cdot X-\nabla d_\epsilon\odot\nabla d_\epsilon:\nabla X\big)
=\int_0^\infty\eta(t)\int_\Omega \big\langle \partial_t d_\epsilon+u_\epsilon\cdot\nabla d_\epsilon, X\cdot\nabla d_\epsilon
\big\rangle,
\end{equation}
where $\displaystyle e_\epsilon(d_\epsilon):=\frac12|\nabla d_\epsilon|^2+\frac{1}{4\epsilon^2}
(1-|d_\epsilon|^2)^2$.
Since $$\partial_t d_\epsilon+u_\epsilon\nabla d_\epsilon
\rightharpoonup \partial_t d+u\cdot\nabla d\ \ {\rm{in}}\ \ L^2(\Omega\times (0,+\infty)),
\ {\rm{as}}\ \epsilon\rightarrow 0,$$
we obtain, by sending $\epsilon \rightarrow 0$ in (\ref{e-stationary}) and applying both
Theorem 3.1 and 5.1, that
\begin{equation}\label{stationary1}
\int_0^T\eta(t)\int_\Omega \big(\frac12|\nabla d|^2\nabla\cdot X-\nabla d\odot\nabla d:\nabla X\big)
=\int_0^T\eta(t)\int_\Omega \big\langle \partial_t d+u\cdot\nabla d, X\cdot\nabla d\big\rangle.
\end{equation}
The proof of Theorem \ref{th:1.1} is now complete.\qed

\medskip

\setcounter{section}{5}
\setcounter{equation}{0}
\section{Large time behavior of finite energy solutions and proof of corollary \ref{long-time}}

In this section, we will study the large time asymptotic behavior of the global finite energy
weak solutions obtained in Theorem
\ref{th:1.1} and give a proof of Corollary \ref{long-time}.

\medskip
\noindent{\bf Proof of Corollary \ref{long-time}}:

\smallskip
First it follows from (\ref{1.9}) that
\begin{equation}\label{global_ineq}
{\rm{esssup}}_{t>0} {\bf E}(t)
+\int_0^\infty\int_\Omega \big(\mu|\nabla u|^2+|\Delta d+|\nabla d|^2 d|^2\big)\le {\bf E}(0).
\end{equation}
For any positive integer $m$, define
$(\rho_m, u_m, d_m):Q_1\to\mathbb R_+\times\mathbb R^3\times\mathbb S^2$ by
$$
\begin{cases}
\rho_m(x,t)=\rho(x, t+m),\\
u_m(x,t)=u(x,t+m),\\
d_m(x,t)=d(x,t+m).
\end{cases}
$$
Then $(\rho_m, u_m, d_m)$ is a sequence of finite energy weak solutions
of (\ref{1.1}) in $Q_1$. It follows from (\ref{global_ineq}) that
\begin{eqnarray}\label{bd1}
&&\big\|\rho_m\big\|_{L^\infty([0,1], L^\gamma(\Omega))}
+\big\|\rho_m^\frac12u_m\big\|_{L^\infty([0,1], L^2(\Omega))}
+\big\|\rho_m u_m\big\|_{L^\infty([0,1], L^{\frac{2\gamma}{2\gamma+1}}(\Omega))}
\nonumber\\
&&\quad+\big\|d_m\big\|_{L^\infty([0,1], H^1(\Omega))}\le C({\bf E}(0)),
\end{eqnarray}
and
\begin{equation}
\label{bd2}
\lim_{m\rightarrow\infty}\int_0^1\Big(\big\|\nabla u_m\big\|_{L^2(\Omega)}^2
+\big\|\Delta d_m+|\nabla d_m|^2 d_m\big\|_{L^2(\Omega)}^2\Big)=0.
\end{equation}
After passing to a subsequence, we may assume that as $m\rightarrow\infty$,
$$
\rho_m\rightharpoonup \rho_\infty \ {\rm{in}}\ L^\gamma(Q_1),
\ u_m\rightharpoonup u_\infty \ {\rm{in}}\ L^2([0,1], H^1_0(\Omega)),
\ d_m\rightharpoonup d_\infty \ {\rm{in}}\ L^2([0,1], H^1(\Omega)).
$$
Applying (\ref{bd2}) and the Poincar\'e inequality, we have
$$
\lim_{m\rightarrow\infty}\int_0^1\big\|u_m\big\|_{L^2(\Omega)}^2=0,
$$
and hence $u_\infty=0$  a.e. in $Q_1$.

Sending $m\rightarrow\infty$ in $(\ref{1.1})_3$, we see that $d_\infty$ solves
$$\partial_t d_\infty=\Delta d_\infty+|\nabla d_\infty|^2 d_\infty  \ {\rm{in}}\ Q_1.$$
On the other hand, by the lower semicontinuity and (\ref{bd2}) we have
$$\int_0^1\int_\Omega \big|\Delta d_\infty+|\nabla d_\infty|^2 d_\infty \big|^2=0.$$
Hence $\partial_t d_\infty=0$ in $Q_1$ and $d_\infty(x,t)=d_\infty(x)\in
H^1(\Omega,\mathbb S^2_+)$ is a harmonic map, with $d_\infty=d_0$ on $\partial\Omega$.

By H\"older's inequality, (\ref{bd1}), and (\ref{bd2}), we have
\begin{equation}
\label{bd3}
\lim_{m\rightarrow\infty}\int_0^1 \Big(\big\|\rho_m u_m\big\|_{L^{\frac{6\gamma}{\gamma+6}}(\Omega)}^2
+\big\|\rho_m |u_m|^2\big\|_{L^{\frac{3\gamma}{\gamma+3}}(\Omega)}^2\Big)=0.
\end{equation}
Since $(\rho_m, u_m,d_m)$ solves $(\ref{1.1})_1$ in $Q_1$, we have
$$\partial_t(\rho_m u_m)+\nabla\cdot(\rho_m u_m\otimes u_m)+a\nabla \rho_m^\gamma
=\mu\Delta u_m+\widetilde\mu \nabla(\nabla\cdot u_m)-(\Delta d_m+|\nabla d_m|^2d_m)\cdot\nabla d_m
\ {\rm{in}}\ Q_1,$$
which, after sending $m\rightarrow \infty$ and applying (\ref{bd1}), (\ref{bd2}), (\ref{bd3}), and Claim 3 below,
implies
$$\nabla\rho_\infty^\gamma=0 \ \ \ {\rm{in}}\ \ \ Q_1.$$
Hence $\rho_\infty$ is $x$-independent in $Q_1$.  On the other hand, since $\rho_\infty$ is a weak solution
of
$$\partial_t \rho_\infty+\nabla\cdot(\rho_\infty u_\infty)=0\ \  \ {\rm{in}}\ \ \ Q_1,$$
so that $\partial_t\rho_\infty=0$ and $\rho_\infty$ is $t$-independent in $Q_1$.
Thus $\rho_\infty$ is a constant.

It remains to show $(\rho_m, d_m)\rightarrow (\rho_\infty, d_\infty)$ in $L^\gamma(Q_1)\times L^2([0,1], H^1_{\rm{loc}}(\Omega))$. This is divided into two separate claims.

\smallskip
\noindent{\it Claim 2.} {\it $d_m\rightarrow d_\infty$ in $L^2([0,1], H^1_{\rm{loc}}(\Omega))$}. The idea is based on
the compactness Theorem \ref{th:2.2}, and the argument is similar to that given in \S5.1 and \cite{LW1} Theorem 1.3. For the convenience of readers, we sketch it here. As in \S5.1, for $\Lambda>1$ define
$$G_\Lambda=\Big\{t\in [0,1]\ \Big| \ \liminf_{m\rightarrow\infty}\int_\Omega \Big|\Delta d_m+|\nabla d_m|^2 d_m\big|^2
\le \Lambda\Big\},$$
and
$$B_\Lambda=[0,1]\setminus G_\Lambda.$$
From (\ref{bd2}), we have
\begin{equation}\label{badset_est}
\big|B_\Lambda\big|\le \Lambda^{-1}\liminf_{m\rightarrow\infty}\int_0^1\int_\Omega \Big|\Delta d_m+|\nabla d_m|^2 d_m\big|^2=0.
\end{equation}
Since $d_m$ satisfies (\ref{stationary}) for any $X\in C_0^1(\Omega)$ and $\eta\in C_0^1((0,1))$, it
is not hard to check that there exists a subset $Z\subset G_\Lambda$, with $|Z|=0$, such that for any
$t\in G_\Lambda\setminus Z$, it holds
\begin{eqnarray}\label{stationary2}
&&\int_\Omega \Big(\nabla d_m\odot\nabla d_m-\frac12|\nabla d_m|^2\mathbb I_3\Big)(t): \nabla X
=-\int_\Omega \big\langle (\partial_t d_m+u_m\cdot\nabla d_m)(t), X\cdot\nabla d_m(t)\big\rangle\nonumber\\
&&=-\int_\Omega \big\langle (\Delta d_m+|\nabla d_m|^2 d_m)(t), X\cdot\nabla d_m(t)\big\rangle \ \ \big( {\rm{by}}\ (1.1)_3 \big)
\end{eqnarray}
It is standard (see \cite{LW5}) that (\ref{stationary2}) implies that $d_m(t)$, $t\in G_\Lambda\setminus Z$,
satisfies the almost energy monotonicity
inequality (\ref{almostmono}), i.e.,
$x_0\in\Omega$ and $0<r\le R<{\rm{d}}(x_0,\partial\Omega)$,
\begin{equation}\label{am}
\Psi_R(d_m(t),x_0)\ge \Psi_r(d_m(t),x_0)+\frac12\int_{B_R(x_0)\setminus B_r(x_0)}|x-x_0|^{-1}\big|\frac{\partial d_m(t)}{\partial |x-x_0|}\big|^2,
\end{equation}
where
$$\Psi_r(d_m(t),x_0)=\frac1{r}\int_{B_r(x_0)}\big(\frac12|\nabla d_m|^2(t)-\langle (x-x_0)\cdot\nabla d_m(t), \tau_m(t)\rangle\big)
+\frac12\int_{B_r(x_0)}|x-x_0||\tau_m(t)|^2,
$$
and
$$\tau_m(t)=(\Delta d_m+|\nabla d_m|^2 d_m)(t).
$$
From the definition of $G_\Lambda$, we have
\begin{equation}
\label{tn_bd}
\big\|\tau_m(t)\big\|_{L^2(\Omega)}\le \Lambda, \ \forall t\in G_\Lambda\setminus Z.
\end{equation}
From (\ref{bd1}), we see
\begin{equation}\label{bd4}
E(d_m(t))=\frac12\int_\Omega|\nabla d_m(t)|^2\le C({\bf E}(0)).
\end{equation}
Note also that
\begin{equation}\label{im_con}
d_m^3(x,t)\ge 0 \ {\rm{a.e.}}\ x\in\Omega, \ \forall\ t\in G_\Lambda\setminus Z.
\end{equation}
From (\ref{am}), (\ref{tn_bd}), (\ref{bd4}), and (\ref{im_con}), we conclude
that $\{d_m(t)\}_{m\ge 1}\subset {\bf Y}(C({\bf E}(0)), \Lambda, 0; \Omega)$ for any $t\in G_\Lambda\setminus Z$.
Hence, by Theorem \ref{th:2.2}, we have that $\{d_m\}_{m\ge 1}$ is bounded in $H^2_{\rm{loc}}(\Omega,\mathbb S^2)$
and precompact in $H^1(\Omega,\mathbb S^2)$.

Since $$\partial_t d_m=-u_m\cdot\nabla d_m+(\Delta d_m+|\nabla d_m|^2d_m)\in L^2([0,1], L^\frac32(\Omega))
+L^2([0,1], L^2(\Omega)),$$
and
$$\sup_{m\ge 1}\Big\|\partial_t d_m\Big\|_{L^2([0,1], L^\frac32(\Omega))
+L^2([0,1], L^2(\Omega))}\le C.
$$
We can apply Aubin-Lions' lemma, similar to \S5.1, to conclude that
for any open set $\widetilde\Omega\subset\subset\Omega$,
after taking a subsequence, there holds
\begin{equation}\label{conv1}
\lim_{m\rightarrow\infty}\Big\|\nabla (d_m-d_\infty)\Big\|_{L^2(\widetilde\Omega\times (G_\Lambda\setminus Z))}
=0.
\end{equation}
On the other hand, by (\ref{bd1}), we have
\begin{equation}\label{conv2}
\sup_{m\ge 1}\Big\|\nabla (d_m-d_\infty)\Big\|_{L^2(\widetilde\Omega\times (B_\Lambda\cup Z))}
\le C({\bf E}(0))\big|B_\Lambda\cup Z\big|=0.
\end{equation}
Putting (\ref{conv1}) and (\ref{conv2}) together yields
\begin{equation}\label{conv3}
\lim_{m\rightarrow\infty}\Big\|\nabla (d_m-d_\infty)\Big\|_{L^2(\widetilde\Omega\times (0,1))}=0.
\end{equation}
Claim 2 follows from (\ref{conv3}).

\medskip
\noindent{\it Claim} 3. {\it $\rho_m\rightarrow \rho_{0,\infty}$ in $L^\gamma(Q_1)$.}
To show this claim, first observe that by the same lines of argument in \S5.3
with $(\rho_\epsilon, u_\epsilon, d_\epsilon)$ replaced by $(\rho_m, u_m, d_m)$,
we can obtain that there exist $\theta>0$ and $C>0$ independent of $m$ such that
\begin{equation}\label{est_rho}
\int_0^1\int_\Omega \rho_m^{\gamma+\theta}\le C, \ \forall m\ge 1.
\end{equation}
From (\ref{est_rho}), we may assume that
\begin{equation}\label{conv_rho}
\rho_m^\gamma\rightharpoonup
\overline{\rho_{\infty}^\gamma}
\ {\rm{in}}\ L^{p_1}(Q_1), \ 1<p_1\le \frac{\gamma+\theta}{\gamma}(Q_1).
\end{equation}
There are two methods to prove that  $\overline{\rho_\infty^\gamma}=\rho_\infty^\gamma$
a.e. in $Q_1$ and $\rho_m\rightarrow\rho_\infty$ in $L^\gamma(Q_1)$:
the first is to repeat the same lines of arguments given by
\S5.5, \S5.6, and \S5.7 with $(\rho_\epsilon, u_\epsilon, d_\epsilon)$ replaced by
$(\rho_m, u_m, d_m)$; and the second is to apply the div-curl lemma, similar to
\cite{FP} Proposition 4.1. Here we sketch it. For simplicity, assume the pressure coefficient
$a=1$.
Let Div and Curl denote the divergence and curl operators in
$Q_1$. As pointed out by \cite{FNP1} Remark 1.1, (\ref{1.7}) also holds
for $b(\rho_m)=G(\rho_m^{\gamma})$ when $G(z)=z^\alpha$, with
$$\displaystyle 0<\alpha<\min\big\{\frac{1}{2\gamma},
\frac{\theta}{\theta+\gamma}\big\}.$$
Using the equation (\ref{1.7}), one can check that
$${\rm{Div}}\big[0,0, 0, G(\rho_m^{\gamma})\big]
\ {\rm{is\ precompact\ in}}\ W^{-1, q_1}(Q_1)$$
for some $q_1>1$.

While, using the equation $(\ref{1.1})_2$ and (\ref{bd1}), one can check
$${\rm{Curl}}\big[0, 0, 0, \rho_m^\gamma\big]
\ {\rm{is\ precompact\ in}}\ W^{-1, q_2}(Q_1)$$
for some $q_2>1$.

Assume
$$G(\rho_m^{\gamma})\rightharpoonup  \overline{G(\rho_\infty^{\gamma})}
\ {\rm{in}}\ L^{p_2}(Q_1),$$
and
$$G(\rho_m^\gamma)\rho_m^\gamma\rightharpoonup
\overline{G(\rho_\infty^\gamma) \rho_\infty^\gamma}
\ {\rm{in}}\ L^r(Q_1),$$
with
$$\displaystyle p_2=\frac{1}{\alpha}, \ \displaystyle\frac1{r}=\frac{1}{p_2}+\frac{1}{p_1}.$$
Then by the div-curl lemma we conclude that
$$\overline{G(\rho_\infty^\gamma) \rho_\infty^\gamma} =
\overline{G(\rho_\infty^\gamma)}\ \overline{\rho_\infty^\gamma}
$$
As $G$ is strictly monotone, this implies $\overline{G(\rho_\infty^\gamma)}=G\Big(\overline{\rho_\infty^\gamma}\Big)$.
Since $L^{p_2}$ is uniformly convex, this implies that the convergence in (\ref{conv_rho})
is strong in $L^1(Q_1)$. Hence we have that
$$\rho_m\rightarrow\rho_\infty \ {\rm{in}}\ L^\gamma(Q_1).$$
Since
$\displaystyle\int_\Omega\rho_m(t)=\int_\Omega \rho_0$ for $0<t<1$
and $\rho_\infty$ is constant, it follows that
$\displaystyle\rho_\infty\equiv\frac{1}{|\Omega|}\int_\Omega \rho_0\ (:=\rho_{0,\infty})$.

From claim 2, claim3, and (\ref{bd1}), we can apply Fubini's theorem to conclude that
there exists $t_m\in (m, m+1)$ such that as $m\rightarrow\infty$,
$$\big(\rho(t_m), d(t_m)\big)\rightarrow \big(\rho_{0,\infty}, d_\infty\big)
\ {\rm{in}}\ L^\gamma(\Omega)\times H^1_{\rm{loc}}(\Omega,\mathbb S^2),$$
and
$$\big\|u(t_m)\big\|_{H^1(\Omega)}\rightarrow 0.$$
Hence by Sobolev's embedding theorem we have that
$u(t_m)\rightarrow 0$ in $L^p(\Omega)$ for any $1<p<6$. The proof is now complete.
\qed

\bigskip
\bigskip
\noindent{\bf Acknowledgements}.
{Lin is partially supported by NSF of China (Grant 11001085, 11371152) and 973 Program (Grant 2011CB808002). Lai is partially supported by NSF of China (Grants 11201119 and 11126155).  Both Lin and Lai are also partially supported by the Chinese Scholarship Council.
Wang is partially supported by NSF grants 1001115
and 1265574, and NSF of China grant 11128102. The work was
completed while both  Lin and  Lai were visiting Department of Mathematics, University of Kentucky. Both of them
would like to thank the Department for its hospitality and excellent research environment. }

\end{document}